\title{Meta-Modelling in Formal Concept Analysis}
\author{Yingjian Wang~\orcidlink{0009-0005-3492-7992}}
\institute{Technische Universit$\ddot{\text{a}}$t Dresden}
\begin{document}
\maketitle

\begin{abstract}Formal Concept Analysis starts from a very basic data structure comprising objects and their attributes. Sometimes, however, it is beneficial to also define attributes of attributes, viz., meta-attributes. In this paper, we use Triadic Formal Concept Analysis, a triadic approach to Formal Concept
Analysis, to develop a framework for this kind of meta-modelling in Formal Concept Analysis, including formal definitions and appropriate visualizations.
\end{abstract}

\section{Introduction}

\textbf{Formal Concept Analysis} (FCA) was introduced as a mathematical theory modelling
the concept of `concepts' in terms of lattice theory. Introduced by Rudolf Wille in 1982\cite{wille2009restructuring}, FCA has since then grown rapidly and has successfully been applied to many fields.

\textbf{Triadic Concept Analysis} was introduced by Lehmann and Wille as a natural extension of (dyadic) formal concept analysis\cite{lehmann1995triadic}. As a triadic approach to Formal Concept Analysis, Triadic Concept Analysis is based on a formalization
of the triadic relation connecting formal objects, attributes and conditions. This triadic approach can be applied for analysing the ternary relations among \emph{objects\emph{,} attributes \emph{and} meta-attributes}.

We present the background knowledge in the next chapter. Then we start with the construction of triadic context from two diadic contexts. Later we will show the isomorphism between diadic concept lattice and triadic concept lattice. At last, we have insight into triadic implications and conditional implications.% and present a modified \emph{Next Closure algorithm} for computing conditional implications.

\section{Preliminaries}
\subsection{Formal Concept Analysis}

To allow a mathematical
description of extensions and intensions, FCA starts with a (formal) context.\cite{ganter2012book}

\begin{definition}A (formal) context is a triple $\mathbb{K}:=(G, M, I)$, where $G$ is a set whose
elements are called objects, $M$ is a set whose elements are called attributes, and $I$
is a binary relation between $G$ and $M$ (i.e. $I \subseteq G\times M$). $(g,m)\in I$ is read as ``the object g has attribute m''.\end{definition}

Normally a formal context is represented as a cross table: for each object, there is a row uniquely stands for it. And for each attribute, a column is uniquely assigned to it.
The intersection of row $g$ with column $m$ is marked whenever $(g,m) \in I$.\cite{ganter2002formal}

In order to check the common attributes shared by a certain set of objects, or the common objects satisfy a certain set of attributes, an operator is defined.

\begin{definition}For $A\subseteq G$, let

\begin{center}
$A' := \{m \in M$ $|$ $\forall g \in A: (g,m) \in I\}$
\end{center}

and, for $B \subseteq M$, let

\begin{center}
$B' := \{g \in G$ $|$ $\forall m \in B: (g,m) \in I\}.$
\end{center}

$\cdot'$ is the derivation operator for $(G,M,I)$.
\end{definition}

\begin{definition}A (formal) concept of a formal context $(G,M,I)$ is a pair $(A,B)$ with $A \subseteq G$,
$B \subseteq M$, $A' = B$ and $B' = A$. The sets $A$ and $B$ are called the extent and the intent
of the formal concept $(A,B)$, respectively. The set of all formal concepts of $(G,M,I)$ is denoted
$\mathfrak{B}(G,M, I)$.\end{definition}

The description of a concept by extent and intent is redundant. However, this description makes sure that, for a concept $(A, B)$ of $(G, M, I)$, $A$ and $B$ are each maximal (with respect to set inclusion) with the property $A \times B \subseteq I$.\cite{ganter2002formal}

\begin{definition}Let $(A_{1},B_{1})$ and $(A_{2},B_{2})$ be formal concepts of $(G,M, I)$. We say
that $(A_{1},B_{1})$ is a \textbf{subconcept} of $(A_{2},B_{2})$ (and, equivalently, that $(A_{2},B_{2})$ is a
\textbf{superconcept} of $(A_{1},B_{1})$) iff $A_{1} \subseteq A_{2}$. We use the $\leq$-sign to express this relation
and thus have

\begin{center}
$(A_{1},B_{1})\leq (A_{2},B_{2})\Leftrightarrow A_{1} \subseteq A_{2}$.
\end{center}

The set of all formal concepts $\mathfrak{B}(G,M, I)$ of $(G,M, I)$, ordered by this relation, is denoted
\begin{center}
$\underline{\mathfrak{B}}(G,M, I)$
\end{center}

and is called the concept lattice of the formal context $(G,M, I)$. The concept lattice can be represented as a line diagram.
\end{definition}
%%The description of a concept by extent and intent is redundant, because each
%%of the two parts determines the other (since $B = A'$ and $A = B'$).

\subsection{Example}
Here is an example for the above mentioned definitions. Considering the following fragment of a knowledge base:

\emph{Jack is a student.}

\emph{Jason is a librarian.}

\emph{James is a professor.}

...

\emph{A student studies in a classroom.}

\emph{A student studies in a library.}

\emph{A librarian works in an office.}

\emph{A librarian organizes books in a library.}

\emph{A professor teaches in a classroom.}

\emph{A professor works in an office.}

...

This example can be interpreted as a ternary relationship among objects, attributes and meta-attributes, since careers are attributes for individuals and locations are attributes for careers. By definition, we can form two contexts. One for individuals and careers(Table 1), the other one for careers and locations(Table 2). Fig. 1 shows the relevant concept lattice diagrams for the partial contexts. Each circle stands for a formal concept. The names of the attributes are given. Each name is attached to one
of the formal concepts and is written slightly above the respective circle. Dually, the names of objects are written below the respective circles.

The concept order is expressed via the edges that connect different circles. For two circles that are connected by an edge, the lower circle stands for a subconcept of the higher one.
With the help of the edges we can read from the diagram which concepts are subconcepts
of which other concepts, and which objects have which attributes. To do
so, one has to follow ascending paths in the diagram.\cite{ganter2002formal}

For example, consider the object $Student$. From the corresponding circle we can
reach, via ascending paths, the attributes $Classroom$ and $Library$, which means $Student$ has the property $Classroom$ and $Library$. If we consider the attribute \emph{Office}. From the corresponding circle we can reach, via descending paths, the objects $Professor$ and $Librarian$, which means the attribute $Office$ holds for objects $Professor$ and $Librarian$.

\begin{table}[!htb]
\begin{center}\begin{cxt}%

\cxtName{\tiny{\backslashbox{Individuals}{Careers}}}%
\att{Student}%
\att{Librarian}%
\att{Professor}%
\att{\ \ldots}%
\obj{x..p}{Jack}
\obj{.x.p}{Jason}
\obj{..xp}{James}
\obj{qqqo}{$\vdots$}

\end{cxt}

\caption{Context for individuals and careers.}
\label{tab:float}\end{center}

\begin{center}\begin{cxt}%

\cxtName{\tiny{\backslashbox{Careers}{Locations}}}%
\att{Classroom}%
\att{Library}%
\att{Office}%
\att{\ \ldots}%
\obj{xx.p}{Student}
\obj{.xxp}{Librarian}
\obj{x.xp}{Professor}
\obj{qqqo}{$\vdots$}

\end{cxt}

\caption{Context for careers and locations.}
\label{tab:float}\end{center}
\end{table}
\begin{table}[!htb]
\begin{diagram}{40}{45}
\Node{1}{40}{10}
\Node{2}{0}{40}
\Node{3}{40}{40}
\Node{4}{80}{40}
\Node{5}{40}{70}
\Edge{1}{2}
\Edge{1}{3}
\Edge{1}{4}
\Edge{2}{5}
\Edge{3}{5}
\Edge{4}{5}
%\Numbers
\leftAttbox{2}{2}{2}{Student}
\rightAttbox{3}{0}{2}{Professor}
\rightAttbox{4}{2}{2}{Librarian}
\leftObjbox{2}{2}{2}{Jack}
\rightObjbox{3}{2}{2}{James}
\rightObjbox{4}{2}{2}{Jason}
\end{diagram}
\begin{diagram}{400}{75}
\Node{1}{220}{0}
\Node{2}{180}{30}
\Node{3}{220}{30}
\Node{4}{260}{30}
\Node{5}{180}{60}
\Node{6}{220}{60}
\Node{7}{260}{60}
\Node{8}{220}{90}
\Edge{1}{2}
\Edge{1}{3}
\Edge{1}{4}
\Edge{2}{5}
\Edge{2}{6}
\Edge{3}{5}
\Edge{4}{6}
\Edge{4}{7}
\Edge{3}{7}
\Edge{5}{8}
\Edge{6}{8}
\Edge{7}{8}
%\Numbers
\leftAttbox{5}{2}{2}{Classroom}
\rightAttbox{6}{0}{2}{Library}
\rightAttbox{7}{2}{2}{Office}
\leftObjbox{2}{2}{2}{Student}
\rightObjbox{3}{2}{2}{Professor}
\rightObjbox{4}{2}{2}{Librarian}\label{fig1}
\end{diagram}
\begin{center}
  Fig. 1: Relevant concept lattice diagrams for the partial contexts.
\end{center}
\end{table}

Considering the two contexts for the above mentioned example. The attributes for the first context are the objects for the second context. However, FCA is not sufficient to analyse this ternary relation. Considering individuals as objects, careers as attributes and locations as meta-attributes, the relation between objects and meta-attributes can not be directly analysed via (diadic) FCA.

\section{Meta-Modelling in Formal Concept Analysis}

In this section, we first present the relevant knowledge about Triadic Concept Analysis. Then we specify how to derive the corresponding triadic contexts for representing the ternary relationship.

\subsection{Triadic Formal Concept Analysis}
\begin{definition}A triadic context $\mathbb{K}:=(G, M, B, Y)$ consists of sets $G$, $M$ and $B$(stands for objects, attributes and conditions, respectively), and a ternary relation $Y \subseteq G \times M \times B$. An incidence $(g,m,b)\in Y$ is read as ``the object g has the attribute m under the condition b''.
\end{definition}
A triadic context can be described by a
three-dimensional cross table, under suitable permutations of
rows, columns, and layers of the cross table.\cite{lehmann1995triadic}

\begin{definition}For a triadic context $\mathbb{K}:=(K_{1}, K_{2}, K_{3}, Y)$, $\{i, j, k\} = \{1, 2, 3\}$ with $j < k$ and for $X \subseteq K_{i}$ and
$Z \subseteq  K_{j} \times K_{k}$, the (i)-derivation operators are defined by
\begin{center}
$X^{(i)} := \{(a_{j}, a_{k}) \in K_{j} \times K_{k}$ $|$ $\forall a_{i}\in X, a_{i}\times a_{j}\times a_{k} \subseteq Y\}$,\end{center}\begin{center}
$Z^{(i)} := \{a_{i} \subseteq K_{i}$ $|$ $\forall a_{j}\times a_{k}\in Z\in X, a_{i}\times a_{j}\times a_{k} \subseteq Y\}$.

\end{center}

\end{definition}

%The derivation operators are now defined for triadic contexts.
 With (i)-derivation operators, one can analyse triadic contexts in the view of diadic contexts. Lehmann and Wille have introduced different derivation operators, including the one defined in Definition 6, for computing the triadic concepts\cite{lehmann1995triadic}.

\begin{definition}A triadic concept is a triple $(X_{1},X_{2},X_{3})$ with $X_{1}\subseteq G$, $X_{2}\subseteq M$, $X_{3}\subseteq B$ and $X_{1}\times  X_{2}\times X_{3}\subseteq Y$, such that none of the three components can be enlarged without violating the condition $X_{1}\times  X_{2}\times  X_{3}\subseteq Y$ .
We call $X_{1}$ the extent, $X_{2}$ the intent and $X_{3}$
the modus of the formal triadic concept. The set of all triadic formal concepts of $(G,M,B, Y)$ is denoted
$\mathfrak{B}(G,M,B,Y)$.

\end{definition}
For example, a triadic concept $(a, b, c)$ is read as \emph{``object a has attribute b under condition c''}. In formal concept analysis, formal concepts are structured by partial order. Compared with Formal concepts, the structure of triadic concepts is given by the
set inclusion in each of the three components of the triadic concepts.\cite{lehmann1995triadic}

\begin{definition}Let $(P_{1},P_{2},P_{3})$ and $(Q_{1},Q_{2},Q_{3})$ be triadic concepts of $(G,M,B,Y)$. We use $\lesssim_{1},\lesssim_{2}$ and $\lesssim_{3}$ to express the quasiorders:
\begin{center}
$(P_{1},P_{2},P_{3})\lesssim_{i} (Q_{1},Q_{2},Q_{3})\Leftrightarrow P_{i} \subseteq Q_{i}$, for $i \in\{ 1, 2, 3\}$.
\end{center}

And we use $\sim_{1},\sim_{2}$ and $\sim_{3}$ to express the corresponding equivalence relations:
\begin{center}
$(P_{1},P_{2},P_{3})\sim_{i} (Q_{1},Q_{2},Q_{3})\Leftrightarrow P_{i} = Q_{i}$, for $i \in\{ 1, 2, 3\}$.
\end{center}

The set of all triadic concepts of $(G,M,B,Y)$, ordered by this relation, is denoted
\begin{center}
$\underline{\mathfrak{B}}(G,M,B,Y):=(\mathfrak{B},\lesssim_{1},\lesssim_{2}, \lesssim_{3})$
\end{center}

and is called the triadic relational structure of the triadic context $(G,M,B,Y)$.\end{definition}

The following sets can be identified as the ordered sets of extents, intents and modus:

\begin{center}
$\underline{\mathfrak{B}_\text{{extent}}}:=(\mathfrak{B}/\sim_{1},\leq_{1})$, $\underline{\mathfrak{B}_\text{{intent}}}:=(\mathfrak{B}/\sim_{2},\leq_{2})$ and $\underline{\mathfrak{B}_{\text{modus}}}:=(\mathfrak{B}/\sim_{3},\leq_{3})$.
\end{center}

In order to draw the corresponding triadic diagram, the relational structure can be understood as a combination of two types of structures:
the geometric structure $(\mathfrak{B}, \sim_{1},\sim_{2}$ , $\sim_{3})$ and the ordered structures $(\mathfrak{B}/\sim_{i},\leq_{i})$ for $i = 1,2,3$.\cite{lehmann1995triadic} The three equivalence relations $\sim_{1},\sim_{2}$ and $\sim_{3}$ are represented by three systems of parallel lines
in the plane and the elements of one equivalence class are located on one line of
the corresponding parallel system. The ordered structures are represented
by line diagrams. An example will be shown later.

\subsection{Meta-Modelling in Triadic Concept Analysis}

\begin{definition}Given formal contexts $\mathbb{K}_{1}:=(G, M, I)$ and $\mathbb{K}_{2}:=(M, B, J)$, their corresponding triadic context is denoted $\mathbb{K}:=(G, M, B, Y)$, where $(g,m)\in I \wedge (m,b)\in J\Longleftrightarrow (g,m,b)\in Y$. %We use $\mathbb{K}:=\mathbb{K}_{1}\times\mathbb{K}_{2}$ to describe this relationship.

\end{definition}

To make Definition 9 robust, we need to do some modification on $\mathbb{K}_{1}$ and $\mathbb{K}_{2}$ for the following cases. For the case that some objects, denoted $G_{{\emptyset}}$, have no attribute, we introduce a new irrelevant attribute $E_\text{{m}}$ that holds for all the objects, including $G_{\emptyset}$. Such $E_\text{{m}}$ has all the meta-attributes as well to solve the case that there exists some meta-attributes $B_{\emptyset}$ who holds for no attribute.

Considering the above mentioned example, if we consider the formal contexts shown in Table 1 and Table 2 as $\mathbb{K}_{1}$ and $\mathbb{K}_{2}$, we can construct the corresponding triadic context $\mathbb{K}$ by Definition 9. The corresponding triadic context and its triadic diagram are shown in Table 3 and Fig. 2. For simplicity, we use \emph{a, b, c} instead of \emph{Jack, Jason \emph{and} James}; \emph{1, 2, 3} instead of \emph{Student, Librarian \emph{and} Professor}; $\alpha, \beta, \gamma$ instead of \emph{Classroom, Library \emph{and }Office}.

\begin{table}[!htb]
  \centering
  \begin{tabular}{|c||c|c|c|c|c|c|c|c|c|}

\hline

Meta-attributes& \multicolumn{2}{r}{$\alpha$} &

& \multicolumn{2}{r}{$\beta$}&&

\multicolumn{2}{r}{$\gamma$}&\\
\hline

\tiny{\backslashbox{Objects}{Attributes}}& 1 & 2 & 3 & 1 & 2 & 3 & 1 & 2 & 3    \\
 \hline
 \hline
 a&$\times$&\color{white}{w} & \color{white}{w}&$\times$&\color{white}{w} &\color{white}{w} & \color{white}{w}& \color{white}{w} &\\
 \hline
 b&&&&&$\times$&&&$\times$&\\
 \hline
 c&&&$\times$&&&&&&$\times$\\
 \hline

\end{tabular}
\caption{Corresponding triadic context for the previous example.}

\end{table}

\begin{figure}[!htb]
  \centering
  % Requires \usepackage{graphicx}
  \includegraphics[width=2in]{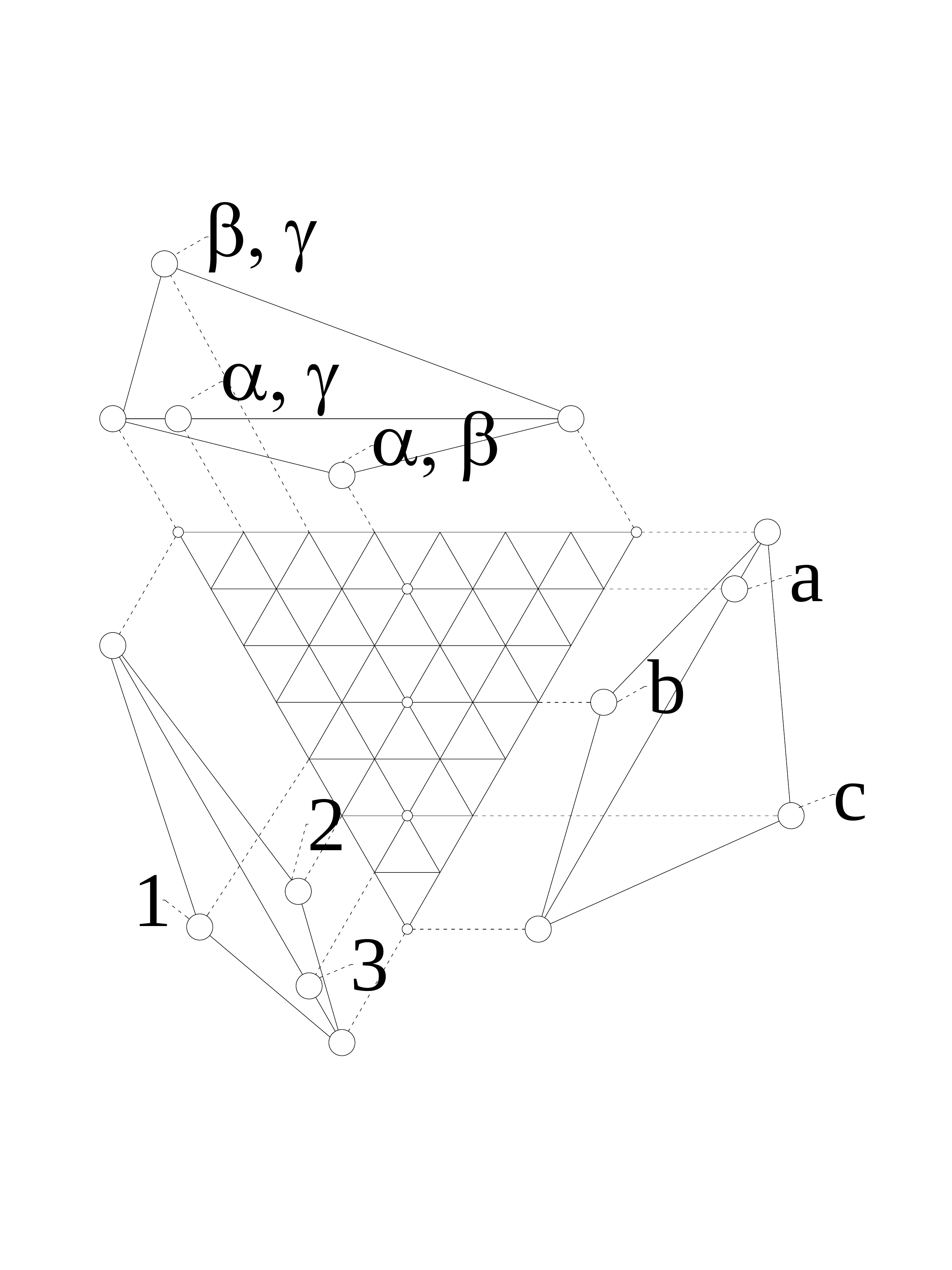}\\
  \begin{center}
  Fig. 2: Triadic diagram for $\mathbb{K}.$
\end{center}
\end{figure}

The geometric structure of the triadic concepts is represented by the triangular
pattern in the center of the diagram. The circles represent the triadic concepts. The horizontal lines stand for the equivalence classes of $\sim_{1}$, the
lines ascending to the right stand for the equivalence classes of $\sim_{2}$, and the lines ascending to the left stand for the equivalence classes of $\sim_{3}$. The perforated lines indicate the connection to the extent diagram on the
right, to the intent diagram on the lower left, and to the modus diagram above.
A circle of the line diagram on the right represents the extent consisting of those objects whose signs are attached to this circle or a circle below. The intents and
modi can analogously be read from the diagram where the intents get larger
from the upper left to the lower right and the modi get larger from the upper
right to to the lower left.\cite{lehmann1995triadic}\cite{Biedermann:1997:TDR:645490.657192}

For instance, in Fig. 2, the next circle
vertically above the lowest circle connects horizontally with the extent $\{c\}$, to
the lower left with the intent $\{3\}$, and to the upper left with the modus
$\{\alpha, \gamma\}$; hence it represents the triadic concept $({c}, {3}, \{\alpha, \gamma\})$.

Recall that $\mathbb{K}$ is constructed from $\mathbb{K}_{1}$ and $\mathbb{K}_{2}$. We find that the concept lattice diagram for $\mathbb{K}_{1}$ is isomorphic to the extent diagram for $\mathbb{K}$ in our example. However, we cannot conclude that the concept lattice diagrams of $\mathbb{K}_{1}$ and $\mathbb{K}_{2}$ are bijective to some line diagrams of $\mathbb{K}$ since the concept lattice diagram of $\mathbb{K}_{2}$ is an counter example. In the next chapter we will modify the triadic context to satisfy this `isomorphism'.

\section{Isomorphism}
We already defined the way to construct the triadic context $\mathbb{K}$ from diadic contexts $\mathbb{K}_{1}$ and $\mathbb{K}_{2}$. We want to show that after the modification on $\mathbb{K}_{1}$ and $\mathbb{K}_{2}$, the concept lattice diagram is isomorphic to the line diagrams of $\mathbb{K}$. Moreover, we can derive all the formal concepts of $\mathbb{K}_{1}$ and $\mathbb{K}_{2}$ from the triadic diagram of $\mathbb{K}$ in this chapter.

\begin{definition}Two ordered sets $\mathbb{V}$ and $\mathbb{W}$ are isomorphic $(\mathbb{V} \cong \mathbb{W} )$, if there
exists a bijective mapping $\varphi: \mathbb{V} \rightarrow \mathbb{W}$ with $x \leq y \Leftrightarrow \varphi(x) \leq \varphi(y)$ for $x, y\in \mathbb{V}$. The mapping
$\varphi$ is then called order isomorphism between $\mathbb{V}$ and $\mathbb{W}$ .\end{definition}

Before we have insight into the isomorphism of the diagrams, two lemmas are proved as appetizers, as they make an effort to the proving of the isomorphism in the later part of this chapter.

\begin{lemma}Given a formal context $\mathbb{C}:=(G, M, I)$, its intent set and extent set denoted $INT$ and $EXT$ respectively. After deleting the column of a random attribute $m\in M$, we get a new formal context $\mathbb{C}_{*}:=(G, M/m, I_{*})$, where $I_{*}:=I/I_{m}$ and $I_{m}:=\{(g, m)\in I$ $|$ $g\in G\}$. Its intent set and extent set denoted $INT_{*}$ and $EXT_{*}$ respectively. Then $INT_{*}$ is equal to $INT/m$ and $EXT_{*}\subseteq EXT$. $INT/m$ stands for the set constructed from $INT$ by removing all the occurrences of $m$ in each element of $INT$.\end{lemma}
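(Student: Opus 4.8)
The plan is to reduce both claims to a direct comparison of the derivation operators of $\mathbb{C}$ and $\mathbb{C}_{*}$. Write $\cdot'$ for the derivation of $\mathbb{C}$ and $\cdot^{*}$ for that of $\mathbb{C}_{*}$, and recall the standard fact that the intents of a context are exactly the images of object sets under derivation, $INT = \{A' \mid A \subseteq G\}$, and dually $EXT = \{B' \mid B \subseteq M\}$; the same characterisations hold for $INT_{*}$ and $EXT_{*}$ with $\cdot^{*}$ and $M \setminus \{m\}$. First I would record two elementary identities. Since $I_{*}$ and $I$ agree on every column except $m$, for each object set $A \subseteq G$ the attributes shared by $A$ in $\mathbb{C}_{*}$ are precisely those shared in $\mathbb{C}$ with $m$ discarded, i.e. $A^{*} = A' \setminus \{m\}$. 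Dually, for any attribute set $B \subseteq M \setminus \{m\}$ the condition $m \notin B$ makes the two incidences indistinguishable on $B$, so $B^{*} = B'$.

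The intent claim then follows immediately: applying the first identity to the characterisation of intents gives $INT_{*} = \{A^{*} \mid A \subseteq G\} = \{A' \setminus \{m\} \mid A \subseteq G\} = \{B \setminus \{m\} \mid B \in INT\}$, which is exactly $INT/m$ by definition.

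For the extent inclusion I would use the second identity. Every element of $EXT_{*}$ has the form $B^{*}$ for some $B \subseteq M \setminus \{m\}$, and $B^{*} = B'$. Since $B \subseteq M \setminus \{m\} \subseteq M$, the set $B'$ is the extent of the concept $(B', B'')$ of $\mathbb{C}$, hence lies in $EXT$. Therefore $EXT_{*} = \{B' \mid B \subseteq M \setminus \{m\}\} \subseteq \{B' \mid B \subseteq M\} = EXT$.

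The only real subtlety, and the reason the extent statement is an inclusion rather than an equality, is that deleting the column restricts the attribute sets available for generating extents from $M$ to $M \setminus \{m\}$; extents of $\mathbb{C}$ that could only be cut out with the help of $m$ may collapse into larger ones in $\mathbb{C}_{*}$, so $EXT$ can strictly contain $EXT_{*}$. I would therefore take care to justify the two derivation identities rigorously --- in particular the harmless-looking $A^{*} = A' \setminus \{m\}$ --- since the whole argument rests on the fact that removing a single column perturbs the derivation in exactly this controlled way.
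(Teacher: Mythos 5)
Your proposal is correct. On the intent claim it is essentially the paper's own argument in streamlined form: the paper likewise reduces everything to the fact that object-set derivations in $\mathbb{C}_{*}$ are those in $\mathbb{C}$ with $m$ discarded (written $O^{\diamond}=O'/m$ there), but it establishes this via a case split on $m\notin O'$ versus $m\in O'$, whereas you obtain it in one line from the observation that $I$ and $I_{*}$ agree outside the deleted column. The genuine divergence is in the extent inclusion. The paper stays on the object side: for each $O\subseteq G$ it must exhibit a concept of $\mathbb{C}$ whose extent is $O^{\diamond\diamond}$, which forces a further subcase --- if $O^{\diamond}\in INT$ then $(O^{\diamond\diamond},O^{\diamond})$ is a concept of $\mathbb{C}$, otherwise $(O^{\diamond\diamond},O^{\diamond}\cup\{m\})$ is. You instead dualize: every extent of $\mathbb{C}_{*}$ is $B^{*}$ for some attribute set $B\subseteq M\setminus\{m\}$, and since $m\notin B$ the identity $B^{*}=B'$ makes $B^{*}$ an extent of $\mathbb{C}$ outright, with no cases at all. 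Your route is shorter and cleaner; what the paper's case analysis buys is constructive information that it reuses later --- Algorithm 1 rebuilds $\mathfrak{B}_{*}$ from $\mathfrak{B}$ precisely by distinguishing whether $int/M_{\text{remove}}$ is already an intent of $\mathbb{C}$ (keep the old extent) or not (recompute it), which is exactly the dichotomy your argument bypasses. If you wanted your proof to support that algorithm as well, you would need to add back the correspondence between individual concepts of $\mathbb{C}$ and $\mathbb{C}_{*}$, not just the equality and inclusion of the intent and extent sets.
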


%We can construct $INT_{1}$ from $INT$ by removing all the occurrences of $m$ in each element of $INT$.

\begin{proof}$\mathbb{C}$ and $\mathbb{C}_{*}$ share the same set of objects $G$. %If we can prove that
For every subset $O$ of $G$, $(O'', O')$ is a concept of $\mathbb{C}$ and $(O^{\diamond\diamond}, O^{\diamond})$ is a concept of $\mathbb{C}_{*}$. To avoid
ambiguity, we use $\cdot^{\diamond}$ as the derivation operator for $\mathbb{C}_{*}$, instead of $\cdot'$. If $O^{\diamond}$ is equal to $O'/m$, then follows Lemma 1. We now show that $O^{\diamond}=O'/m$ and $O^{\diamond\diamond}\in EXT$ by discussing different cases.
%To prove the claim, we will show that there is a non-injective surjective mapping from $\mathfrak{B}(G, M, I)$ to $\mathfrak{B}_{1}(G, M_{1}, I_{1})$, here $M_{1}$ is $M/m$, we use case distinction to construct $INT_{1}$.

%For an intent $int$ in $INT$, $int$ either includes $m$ or not. We use $EXT$ and $EXT_{1}$ to denote the set of extents, $ext$ and $ext_{1}$ to denote the corresponding extents for $int$ and $int_{1}$.
%We will prove that the number of intents will not increase after removing $m$ as well.

%At the beginning: set $INT_{1}=\emptyset$.

%For every subset $A$(not necessary to be an extent) of $G$, do the following:

%1. $m\notin {A}'.$\footnote{In truth, the two formal context share the same set of objects. To avoid ambiguity, when mention ${A}$, a subset of $G$, we use ${A}$ for $\mathbb{K}:=(G, M, I)$ and ${A}_{1}$ for $\mathbb{K}_{1}:=(G, M/m, I_{1})$, although ${A}$ and ${A}_{1}$ stand for the same set of objects. ${A}'$ and ${A}_{1}'$ stand for ${A}^{I}$ and ${{A}_{1}}^{I_{1}}$ respectively, although they use the same notation ``$\cdot'$''.}

1. Assume $m \notin O'$, then $(O'', O')$ is a concept of $\mathbb{C}$ and $(O^{\diamond\diamond}, O^{\diamond})$ is a concept of $\mathbb{C}_{*}$ while $O''$ is equal to $O^{\diamond\diamond}$ and $O'$ is equal to $O^{\diamond}$. As $m \notin O'$ by assumption, $O^{\diamond}$ is equal to $O'/m$ and $O^{\diamond\diamond}=O''\in EXT$. $(O^{\diamond\diamond}, O^{\diamond})$ is a concept of $\mathbb{C}$ as well.

%The absence of $m$ will not influent ${A}$. In this case, the corresponding concept in $\mathfrak{B}(G, M, I)$ is $(ext, int)$, where $ext= A''$ and $int=A'$. $(ext, int)$ is mapped to the concept $(ext_{1}, int_{1})$ in $\mathfrak{B}_{1}(G, M_{1}, I_{1})$, where $int_{1} = int = {A}'= {A}_{1}'$, $ext_{1} = int_{1}' = {A}_{1}''$, $ext = {A}''$. Notice that $ext\subseteq ext_{1}$.\footnote{Case 2.2 would show the reason why we use ``$\subseteq$'' but not ``$\equiv$'' here.}

%Add ${A}'$ to $INT_{1}$.

%In this case, %$({A}'', int)\in \mathfrak{B}$ is mapped to $({A}_{1}'', int)\in \mathfrak{B}_{1}$, the number of intents stays the same after removing $m$.

2. Assume $m\in O'$, then $(O'', O')$ is a concept of $\mathbb{C}$ and $(O^{\diamond\diamond}, O^{\diamond})$ is a concept of $\mathbb{C}_{*}$. As $m \in O'$ by assumption, $O^{\diamond}$ is equal to $O'/m$ and $O''\subseteq O^{\diamond\diamond}$ by Definition 2. Moreover, if $O^{\diamond}\in INT$, $(O^{\diamond\diamond}, O^{\diamond})$ is a concept of $\mathbb{C}$. Otherwise, $(O^{\diamond\diamond}, O^{\diamond}\cup m)$ is a concept of $\mathbb{C}$. $O^{\diamond\diamond}\in EXT$ anyway.

In both cases, $O^{\diamond}$ is equal to $O'/m$ and $O^{\diamond\diamond}\in EXT$. In another word, $INT_{*}$ is equal to $INT/m$ and $EXT_{*}\subseteq EXT$.\qed\end{proof}

%Benefits from the proof above, the set of formal concepts $\mathfrak{B}_{*}$, let along $INT_{*}$, can be constructed via Algorithm 1. For the case that the output is $INT_{*}$ rather than $\mathfrak{B}_{*}$, Algorithm 1 may require $INT$ rather than $\mathfrak{B}$ as input.

%\begin{algorithm}
%\caption{Generating $\mathfrak{B}_{*}$ from $\mathfrak{B}$ (for Lemma 1)}
% Input: $\mathbb{C}, \mathbb{C}_{*}$ and $\mathfrak{B}$.

 %Output: $\mathfrak{B}_{*}$.

 %For every formal concept $(ext, int)$ of $\mathbb{C}:=(G, M, I)$, do the following:
 %\begin{algorithmic}[1]

  %\If{$m\notin int$}
  %\State $\mathfrak{B}_{*}\leftarrow \mathfrak{B}_{*}\cup  (ext, int)$
  %\Else
  %\If{$int/m \in INT$}
  %\State $\mathfrak{B}_{*}\leftarrow \mathfrak{B}_{*}\cup ((int/m)', int/m)$
  %\Else \State $\mathfrak{B}_{*}\leftarrow  \mathfrak{B}_{*}\cup (ext, int/m)$
  %\EndIf
  % \EndIf

  %\end{algorithmic}
%\end{algorithm}

Now let's think about a variant of Lemma 1. What if we are talking about $\mathbb{C}_{\star}:=(G, M, I_{\star})$ rather then $\mathbb{C}_{*}:=(G, M/m, I_{*})$, where $I_{\star}:=I/I_{m}$ as well? The difference is that, instead of deleting the column $m$, we only remove the relations $I_{m}$, which means that $m'=\emptyset$ holds for $\mathbb{C}_{\star}$ while it doesn't hold for $\mathbb{C}_{*}$. Can we apply Lemma 1 on $\mathbb{C}_{\star}$ as well directly? Unfortunately, the answer is no.

\begin{table}[!htb]

\begin{center}
\begin{cxt}%
\cxtName{$\mathbb{C}$}%
\att{$1$}%
\att{$2$}%
\att{$3$}%
\obj{xxx}{a}
\obj{.xx}{b}
\obj{..x}{c}

\end{cxt}
\begin{cxt}%
\centering
\cxtName{$\mathbb{C}_{\star}$}%
\att{1}%
\att{2}%
\att{3}%
\obj{xx.}{$\color{white}{.}$a}
\obj{.x.}{$\color{white}{.}$b}
\obj{...}{$\color{white}{.}$c}

\end{cxt}

\end{center}
\caption{$\mathbb{C}$ and $\mathbb{C}_{\star}$.}

\end{table}

Consider the example given in Table 4, there are three concepts for $\mathbb{C}$, $(\{a\}, \{1, 2, 3\})$, $(\{a, b\}, \{ 2, 3\})$ and $(\{a, b, c\}, \{3\})$. $\mathbb{C}_{\star}$, who is constructed from $\mathbb{C}$ by removing the occurrences of attribute \emph{3}, has four concepts, $(\{a\}, \{1, 2\})$, $(\{a, b\}, \{ 2\})$, $(\{a, b, c\}, \emptyset)$ and $(\emptyset, \{1,2,3\})$. It is easily seen that neither ``$INT_{\star}$ is equal to $INT/m$'' nor ``$EXT_{\star}\subseteq EXT$''. However, with more constraints, we can still find out the relationships between $INT$ and $INT_{\star}$, $EXT$ and $EXT_{\star}$.

\begin{lemma}Given a formal context $\mathbb{C}:=(G, M, I)$, its intent set and extent set denoted $INT$ and $EXT$ respectively. After removing the occurrences of a random attribute $m$ from $M$, we get a new formal context $\mathbb{C}_{\star}:=(G, M, I_{\star})$, where $I_{\star}:=I/I_{m}$ and $I_{m}:=\{(g, m)\in I$ $|$ $g\in G\}$. Its intent set and extent set denoted $INT_{\star}$ and $EXT_{\star}$ respectively. Then the following statements hold.

1. If $(\emptyset, M)$ is a concept of $\mathbb{C}$, $INT_{\star}$ is equal to $((INT-{M})/m)\cup {M}$ and $EXT_{\star}\subseteq EXT$.

2. Otherwise $INT_{\star}$ is equal to $(INT/m)\cup {M}$ and $(EXT_{\star}-\emptyset )\subseteq EXT$.

$(INT-{M})$ means that if $M\in INT$, remove ${M}$ from $INT$. $INT/m$ stands for the set constructed from $INT$ by removing all the occurrences of $m$ in each element of $INT$. $(EXT_{\star}-\emptyset)$ means that if $\emptyset\in EXT_{\star}$, remove $\emptyset$ from $EXT_{\star}$.\end{lemma}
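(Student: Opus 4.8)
The plan is to compare the derivation operator of $\mathbb{C}_{\star}$, which I will write $\cdot^{\diamond}$ as in the proof of Lemma 1, with the operator $\cdot'$ of $\mathbb{C}$, and then to read off both numbered cases from a single distinction, namely whether $M'=\emptyset$. First I would record the basic behaviour of $\cdot^{\diamond}$: since the column of $m$ is empty in $\mathbb{C}_{\star}$, no nonempty object set can retain $m$, so $O^{\diamond}=O'/m$ for every $\emptyset\neq O\subseteq G$, while $\emptyset^{\diamond}=M$ and, dually, $M^{\diamond}=\emptyset$. The immediate consequence is that $(\emptyset, M)$ is always a concept of $\mathbb{C}_{\star}$, so $M\in INT_{\star}$ and $\emptyset\in EXT_{\star}$ hold unconditionally; this already accounts for the stray $M$ in the intent formulas and the stray $\emptyset$ in the extent formulas.

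Next I would settle the extent inclusion, which is the common content of both extent claims. For a concept $(A,N)$ of $\mathbb{C}_{\star}$ with $A\neq\emptyset$ I would compute $A=A^{\diamond\diamond}=(A'/m)^{\diamond}=(A'/m)'$, the last equality holding because $A'/m$ contains no occurrence of $m$ and is therefore derived identically in both contexts. Since $A'/m\subseteq A'$ and $\cdot'$ is order-reversing, $(A'/m)'\supseteq A''$, whence $A\supseteq A''$; combined with extensivity $A\subseteq A''$ this yields $A=A''$, i.e. $A\in EXT$. Thus every nonempty extent of $\mathbb{C}_{\star}$ lies in $EXT$.

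For the intent side I would establish the correspondence $K\mapsto K/m$ between the intents of $\mathbb{C}$ with nonempty extent and the nonempty-extent intents of $\mathbb{C}_{\star}$. In one direction, if $K\in INT$ with $K'\neq\emptyset$ then $K''=K$ gives $K/m=(K')^{\diamond}\in INT_{\star}$; conversely, any nonempty-extent intent $N$ of $\mathbb{C}_{\star}$ equals $A^{\diamond}=A'/m$ for $A=N^{\diamond}\neq\emptyset$, so $N=K/m$ with $K=A'\in INT$. Hence the nonempty-extent intents of $\mathbb{C}_{\star}$ are exactly $\{K/m\mid K\in INT,\ K'\neq\emptyset\}$. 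I would then split on the hypothesis, using the observation that $K'=\emptyset$ forces $K=K''=M$, so $M$ is the unique intent of $\mathbb{C}$ with empty extent, occurring precisely when $M'=\emptyset$, i.e. precisely when $(\emptyset, M)$ is a concept of $\mathbb{C}$; the very same condition characterizes whether $\emptyset\in EXT$. In case~1 ($M'=\emptyset$) I discard $M$ before stripping $m$, obtaining $INT_{\star}=((INT-\{M\})/m)\cup\{M\}$, and since $\emptyset\in EXT$ the extent bound sharpens to $EXT_{\star}\subseteq EXT$. In case~2 every intent has nonempty extent, giving $INT_{\star}=(INT/m)\cup\{M\}$, while $\emptyset\notin EXT$ forces its removal, yielding $(EXT_{\star}-\emptyset)\subseteq EXT$.

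I expect the main obstacle to be the bookkeeping around the empty set, which plays a double role: as an object set it produces the intent $M$ that must be re-adjoined, and as a candidate extent it belongs to $EXT$ exactly under the case hypothesis. The crux is recognizing that the single equivalence ``$(\emptyset, M)$ is a concept of $\mathbb{C}$'' $\Leftrightarrow M'=\emptyset$ governs all four sub-claims at once; the remaining verifications are the short closure computations above, and the stated set equalities are unaffected by the possible collision of two intents of $\mathbb{C}$ that differ only in $m$.
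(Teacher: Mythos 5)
Your proposal is correct, but it takes a genuinely different route from the paper's. The paper proves this lemma by reduction to Lemma 1: it compares the concept set $\mathfrak{B}_{\star}$ with the concept set $\mathfrak{B}_{*}$ of the column-deleted context $\mathbb{C}_{*}$, arguing concept by concept (via maximality of extents and intents) that $\mathfrak{B}_{\star}$ is exactly $\mathfrak{B}_{*}$ with $(\emptyset, M/m)$ replaced by, or augmented with, $(\emptyset, M)$, and then imports the relations $INT_{*}=INT/m$ and $EXT_{*}\subseteq EXT$ from Lemma 1. Because the paper's primary case split is on whether $(\emptyset, M/m)\in\mathfrak{B}_{*}$ --- which does not coincide with the statement's split on whether $(\emptyset, M)\in\mathfrak{B}$ --- its second case needs a further sub-split, and along the way it even asserts $EXT_{\star}=EXT_{*}$ where $EXT_{\star}=EXT_{*}\cup\{\emptyset\}$ is what actually holds (harmless for the claimed inclusions, but sloppy). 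Your argument is instead self-contained: you work directly with the two derivation operators, record that $O^{\diamond}=O'/m$ for nonempty $O\subseteq G$ while $(\emptyset, M)$ is \emph{unconditionally} a concept of $\mathbb{C}_{\star}$, deduce that every nonempty extent of $\mathbb{C}_{\star}$ is closed in $\mathbb{C}$, and establish the exact correspondence $K\mapsto K/m$ between nonempty-extent intents of the two contexts; the single equivalence $M'=\emptyset\Leftrightarrow(\emptyset,M)\in\mathfrak{B}$ then delivers both numbered cases simultaneously. (One pedantic point: in the converse direction of your correspondence you should note explicitly that $K=A'$ has $K'=A''=A\neq\emptyset$, which follows from your extent step; otherwise the ``exactly'' is unsupported.) What the paper's route buys is economy inside its own development --- Lemma 1 is reused, and the concept-by-concept transfer directly motivates Algorithms 1 and 2. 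What yours buys is a shorter proof whose case structure matches the statement, which cleanly explains the adjoined $M$ and the discarded $\emptyset$ as the two faces of the ever-present concept $(\emptyset,M)$ of $\mathbb{C}_{\star}$, and which generalizes verbatim to removing a whole set $M_{\text{remove}}$ of attributes (Lemma 4) instead of requiring the recursive application the paper invokes there.
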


\begin{proof}The proof is based on the knowledge of Lemma 1.

We already know that $\mathbb{C}_{*}:=(G, M/m, I_{*})$, where $I_{*}:=I/I_{m}$ and $I_{m}:=\{(g, m)\in I$ $|$ $g\in G\}$. The only difference between  $\mathbb{C}_{*}$ and $\mathbb{C}_{\star}$ is that $\mathbb{C}_{\star}$ has attribute $m$, who holds for no object. If we find out the relation between $INT_{*}$ and $INT_{\star}$, we can use Lemma 1 to find out the relation between $INT$ and $INT_{\star}$ as well. Assume that $(A, B)$ is a concept of $\mathbb{C}_{*}$, then follows $A\times B\subseteq I_{\star}$ as well.

1. For the case that $(\emptyset, M/m)$ is a concept of $\mathbb{C}_{*}$.

It is easy to see that in this case, $(\emptyset, M)$ is a concept of $\mathbb{C}$, since $(M/m)'=\emptyset$ in $\mathbb{C}$, which implies $M'=\emptyset$. Since $\emptyset \times m\in I_{\star}$, $(\emptyset, M)$ is a concept of $\mathbb{C}_{\star}$. For any other concept of $\mathbb{C}_{*}$, for example $(A, B)$, since $A$ and $B$ are each maximal (with respect to set inclusion) with the property $A\times B\subseteq I$, $(A, B)$ is a concept of $\mathbb{C}_{\star}$ as well. Now we get a set of concepts for $\mathbb{C}_{\star}$, it can be expressed as $\mathfrak{B}_{\star partial}:=(\mathfrak{B}_{*}-(\emptyset, M/m))\cup (\emptyset, M)$. In truth, it is exactly $\mathfrak{B}_{\star}$. Assume that $\mathfrak{B}_{\star}$ is not equal to $\mathfrak{B}_{\star partial}$, then there exist a concept $(A, B)$ such that $(A, B)\in \mathfrak{B}_{\star}$ and $(A, B)\notin \mathfrak{B}_{\star partial}$. Since $(A, B)$ is not $(\emptyset, M)$, which means that $m\notin B$, and $A$ and $B$ are each maximum with respect to $A\times B\in I_{\star}$, $A$ and $B$ are each maximum with respect to $A\times B\in I_{*}$ too. Then $(A, B)$ is in $\mathfrak{B}_{\star partial}$, conflict occurs. Therefore, $\mathfrak{B}_{\star}:=(\mathfrak{B}_{*}-(\emptyset, M/m))\cup (\emptyset, M)$, which means that $EXT_{\star}$ is equal to $EXT_{*}$ and $INT_{\star}$ is equal to $(INT_{*}-{M/m})\cup M$. By Lemma 1, $EXT_{*}\subseteq EXT$ and $INT_{*}=INT/m$. Therefore $EXT_{\star}\subseteq EXT$ and $\mathbb{C}$, $INT_{\star}=(INT/m-{M/m})\cup {M}=((INT-{M})/m)\cup {M}$.

2. For the case that $(\emptyset, M/m)$ is not a concept of $\mathbb{C}_{*}$.

Every concept of $\mathbb{C}_{*}$, for example $(A, B)$, is a concept of $\mathbb{C}_{\star}$ since $A$ and $B$ are each maximal (with respect to set inclusion) with the property $A\times B\subseteq I$. On the other hand, $m$ is an attribute who holds for no object of $\mathbb{C}_{\star}$, which implies that $(\emptyset, M)$ is a concept of $\mathbb{C}_{\star}$ as well. Now we get $\mathfrak{B}_{\star partial}:=\mathfrak{B}_{*}\cup (\emptyset, M)$. Assume that $\mathfrak{B}_{\star}$ is not equal to $\mathfrak{B}_{\star partial}$, then there exist a concept $(A, B)$ such that $(A, B)\in \mathfrak{B}_{\star}$ and $(A, B)\notin \mathfrak{B}_{\star partial}$. Since $(A, B)$ is not $(\emptyset, M)$, which means that $m\notin B$, and $A$ and $B$ are each maximum with respect to $A\times B\in I_{\star}$, $A$ and $B$ are each maximum with respect to $A\times B\in I_{*}$ too. Then $(A, B)$ is in $\mathfrak{B}_{\star partial}$, conflict occurs. Therefore, $\mathfrak{B}_{\star}:=\mathfrak{B}_{*}\cup (\emptyset, M)$, which means that $EXT_{\star}$ is equal to $EXT_{*}$ and $INT_{\star}$ is equal to $INT_{*}\cup M$.

If $(\emptyset, M)$ is a concept of $\mathbb{C}$, since $(\emptyset, M/m)$ is not a concept of $\mathbb{C}_{*}$, which means there exist an object who has all the attributes $M/m$, $m$ is the attribute who hold for no object in $\mathbb{C}$. Therefore $(\emptyset, M)$ is a concept of $\mathbb{C}$. Since $M$ is the only intent of $\mathbb{C}$ that $m$ is included and $M/m$ is an intent of $\mathbb{C}$ as well, we can conclude that $INT_{*}=INT/m=(INT-M)/m$. It is clear to see that in this case $\mathbb{C}$ and $\mathbb{C}_{\star}$ are equal, then it is sure that $EXT_{\star}=EXT$ and $INT_{\star}=INT$. Therefore $EXT_{\star}\subseteq EXT$ and $INT_{\star}=INT=INT_{*}\cup M=((INT-M)/m)\cup M$.

If $(\emptyset, M)$ is not a concept of $\mathbb{C}$, according to Lemma 1, $EXT_{*}\subseteq EXT$ and $INT_{*}=INT/m$. Therefore $EXT_{\star}\subseteq EXT$ and $INT_{\star}=INT=INT_{*}\cup M=(INT/m)\cup M$.\qed\end{proof}

%\begin{algorithm}
%\caption{Generating $INT_{\star}$ from $INT$ (for Lemma 2)}
% Input: $\mathbb{C}, \mathbb{C}_{\star}$ and $\mathfrak{B}$.

% Output: $INT_{\star}$.
 % \begin{algorithmic}[1]

 % \If $(\emptyset, M) \subseteq \mathfrak{B}$
 % \State $INT_{\star} = ((INT-{M})/m)\cup {M}$
 % \Else
 % \State $INT_{\star} = (INT/m)\cup {M}$
 % \EndIf

  %\end{algorithmic}
%\end{algorithm}

Now we find out the relationships between $INT$ and $INT_{\star}$, $EXT$ and $EXT_{\star}$. Moreover, we can generalize Lemma 1 and Lemma 2 to the case that a set $M_\text{remove}\subseteq M$.

\begin{lemma}Given a formal context $\mathbb{C}:=(G, M, I)$, its intent set and extent set denoted $INT$ and $EXT$ respectively. After deleting the columns of several random attributes $M_\text{remove}$ from $M$, we get a new formal context $\mathbb{C}_{*}:=(G, M/M_\text{remove}, I_{*})$, where $I_{*}:=I/I_{M_\text{remove}}$ and $I_{M_\text{remove}}:=\{(g, m)\in I$ $|$ $g\in G$, $m\in M_\text{remove}\}$. Its intent set and extent set denoted $INT_{*}$ and $EXT_{*}$ respectively. Then $INT_{*}$ is equal to $INT/M_\text{remove}$ and $EXT_{*}\subseteq EXT$. $INT/M_\text{remove}$ stands for the set constructed from $INT$ by removing all the occurrences of $M_\text{remove}$ in each element of $INT$.\end{lemma}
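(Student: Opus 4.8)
The plan is to prove Lemma 3 by induction on the number of attributes in $M_\text{remove}$, peeling off one column at a time and invoking Lemma 1 at each step. The base case $M_\text{remove}=\emptyset$ is trivial, since then $\mathbb{C}_{*}=\mathbb{C}$ and so $INT_{*}=INT=INT/\emptyset$ and $EXT_{*}=EXT\subseteq EXT$; the case $|M_\text{remove}|=1$ is precisely Lemma 1. For the inductive step, I would assume the statement holds whenever the deleted set has $n$ elements, and then take $M_\text{remove}$ with $n+1$ elements.

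For the inductive step, I would pick an arbitrary attribute $m\in M_\text{remove}$ and set $M':=M_\text{remove}/m$, a set of size $n$. First I would delete the columns $M'$ from $\mathbb{C}$, obtaining an intermediate context $\mathbb{C}':=(G, M/M', I/I_{M'})$; by the induction hypothesis its intent set is $INT/M'$ and its extent set is contained in $EXT$. The key observation is that $m$ still occurs in $\mathbb{C}'$ and, because only columns in $M'$ were removed, the incidence of $m$ in $\mathbb{C}'$ coincides with its incidence in $\mathbb{C}$. Hence deleting the single column $m$ from $\mathbb{C}'$ yields exactly $\mathbb{C}_{*}$, and Lemma 1 applies to this last deletion: the resulting intent set is $(INT/M')/m$ and the resulting extent set is contained in the extent set of $\mathbb{C}'$. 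Chaining the two extent inclusions by transitivity of $\subseteq$ gives $EXT_{*}\subseteq EXT$, which is one of the two claims.

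It remains to identify the iterated removal of occurrences with the simultaneous one, i.e. to verify $(INT/M')/m=INT/M_\text{remove}$. This is the step I expect to require the most care, although it is purely a bookkeeping argument about the operation ``remove all occurrences of a given attribute from every intent'': removing first all occurrences of the attributes in $M'$ and then all occurrences of $m$ deletes exactly the occurrences of the attributes in $M'\cup\{m\}=M_\text{remove}$, independently of the order, so the composed operation equals $INT/M_\text{remove}$. The only subtlety worth spelling out is that distinct intents of $\mathbb{C}$ may collapse to the same set after removing $M_\text{remove}$, so that $INT/M_\text{remove}$ must be read as a set (with such duplicates identified); since Lemma 1 is already phrased this way, its repeated application respects the identification automatically. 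Combining this equality with the extent inclusion established above completes the induction and hence the proof of Lemma 3.
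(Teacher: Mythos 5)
Your proof is correct and takes essentially the same route as the paper: the paper's entire proof is the remark that the lemma follows by applying Lemma~1 recursively, which is exactly your induction peeling off one attribute of $M_\text{remove}$ at a time. Your write-up merely makes explicit the details the paper leaves implicit (that the intermediate context $\mathbb{C}'$ has the same incidence on $m$ as $\mathbb{C}$, the transitivity of the extent inclusions, and the order-independence of $(INT/M')/m = INT/M_\text{remove}$), all of which are sound.
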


The proof is trivial once we apply Lemma 1 recursively. Benefits from the proof above, the set of formal concepts $\mathfrak{B}_{*}$ can be constructed via Algorithm 1. For the case that we require only $INT_{*}$, rather than $\mathfrak{B}_{*}$, as the output, Algorithm 1 may require $INT$ rather than $\mathfrak{B}$ as input.

\begin{algorithm}
\caption{Generating $\mathfrak{B}_{*}$ from $\mathfrak{B}$ (for Lemma 3)}
 Input: $\mathbb{C}, M_\text{remove}$ and $\mathfrak{B}$.

 Output: $\mathfrak{B}_{*}$ and $\mathbb{C}_{*}$.

 For every formal concept $(ext, int)$ of $\mathbb{C}:=(G, M, I)$, do the following:
  \begin{algorithmic}[1]

  \If{$M_\text{remove}\cap int = \emptyset$}
  \State $\mathfrak{B}_{*}\leftarrow \mathfrak{B}_{*}\cup  (ext, int)$
  \Else
  \If{$int/M_\text{remove}\in INT$}
  \State $\mathfrak{B}_{*}\leftarrow \mathfrak{B}_{*}\cup ((int/M_\text{remove})', int/M_\text{remove})$
  \Else \State $\mathfrak{B}_{*}\leftarrow  \mathfrak{B}_{*}\cup (ext, int/M_\text{remove})$
  \EndIf
   \EndIf
\State $\mathbb{C}_{*}\leftarrow(G, M/M_\text{remove}, I/I_{M_\text{remove}})$

  \end{algorithmic}
\end{algorithm}

\begin{lemma}Given a formal context $\mathbb{C}:=(G, M, I)$, its intent set and extent set denoted $INT$ and $EXT$ respectively. After removing the occurrences of several random attribute $M_\text{remove}$ from $M$, we get a new formal context $\mathbb{C}_{\star}:=(G, M, I_{\star})$, where $I_{\star}:=I/I_{M_\text{remove}}$ and $I_{m}:=\{(g, m)\in I$ $|$ $g\in G, m\in M_\text{remove}\}$. Its intent set and extent set denoted $INT_{\star}$ and $EXT_{\star}$ respectively. Then the following statements hold.

1. If $(\emptyset, M)$ is a concept of $\mathbb{C}$, $INT_{\star}$ is equal to $((INT-{M})/M_\text{remove})\cup {M}$ and $EXT_{\star}\subseteq EXT$.

2. Otherwise $INT_{\star}$ is equal to $(INT/M_\text{remove})\cup {M}$ and $(EXT_{\star}-\emptyset )\subseteq EXT$.

$(INT-{M})$ means that if $M\in INT$, remove ${M}$ from $INT$. $INT/m$ stands for the set constructed from $INT$ by removing all the occurrences of $m$ in each element of $INT$. $(EXT_{\star}-\emptyset)$ means that if $\emptyset\in EXT_{\star}$, remove $\emptyset$ from $EXT_{\star}$.\end{lemma}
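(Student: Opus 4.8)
The plan is to imitate the proof of Lemma 2 almost verbatim, with the single attribute $m$ replaced by the set $M_\text{remove}$ and with every appeal to Lemma 1 replaced by an appeal to Lemma 3. I would deliberately \emph{not} attempt to prove Lemma 4 by applying Lemma 2 recursively: after emptying the first column of $M_\text{remove}$ the pair $(\emptyset, M)$ already becomes a concept, so each subsequent removal would fall into the first case of Lemma 2 and the clean recursion that turned Lemma 1 into Lemma 3 is no longer available. Instead I relate $\mathbb{C}_{\star}$ directly to the column-deletion context $\mathbb{C}_{*}:=(G, M/M_\text{remove}, I_{*})$ of Lemma 3 and import from there the two facts $INT_{*}=INT/M_\text{remove}$ and $EXT_{*}\subseteq EXT$.

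First I would record the structural fact that drives everything: in $\mathbb{C}_{\star}$ every attribute of $M_\text{remove}$ holds for no object, so for any nonempty object set $A$ its derivation never meets $M_\text{remove}$ and therefore coincides with the derivation of $A$ taken in $\mathbb{C}_{*}$. Consequently the only intent of $\mathbb{C}_{\star}$ that contains an attribute of $M_\text{remove}$ is the full set $M$ itself, and $(\emptyset, M)$ is always a concept of $\mathbb{C}_{\star}$. From this I obtain the concept-level correspondence exactly as in Lemma 2: if $(\emptyset, M/M_\text{remove})$ is a concept of $\mathbb{C}_{*}$ then $\mathfrak{B}_{\star}=(\mathfrak{B}_{*}-(\emptyset, M/M_\text{remove}))\cup(\emptyset, M)$, and otherwise $\mathfrak{B}_{\star}=\mathfrak{B}_{*}\cup(\emptyset, M)$. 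The maximality argument (an alleged extra concept would omit $M_\text{remove}$ from its intent, hence would already be maximal in $\mathbb{C}_{*}$) carries over unchanged.

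With the correspondence in hand, the two statements follow by translating $\mathfrak{B}_{*}$ through Lemma 3 and then splitting on whether $(\emptyset, M)$ is a concept of $\mathbb{C}$. For the extents this is immediate: $EXT_{\star}$ is $EXT_{*}$ with at most the empty extent added or exchanged, and since $EXT_{*}\subseteq EXT$ this yields $EXT_{\star}\subseteq EXT$ in the first case and $(EXT_{\star}-\emptyset)\subseteq EXT$ in the second. For the intents I would substitute $INT_{*}=INT/M_\text{remove}$ and then simplify $(INT/M_\text{remove}-(M/M_\text{remove}))\cup M$ to $((INT-M)/M_\text{remove})\cup M$ in the first case, and $INT_{*}\cup M$ to $(INT/M_\text{remove})\cup M$ in the second.

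The hard part will be exactly this last simplification of the intent sets, because the two case splits do not line up: the natural split in the argument is on $\mathbb{C}_{*}$ (is $(\emptyset, M/M_\text{remove})$ a concept?), whereas the lemma is phrased through a split on $\mathbb{C}$ (is $(\emptyset, M)$ a concept?). The delicate point is that the removal operator $\cdot/M_\text{remove}$ may be non-injective on $INT$, so that some intent $B\neq M$ could be mapped onto $M/M_\text{remove}$; I must verify that under the hypotheses of each case no such collision survives, so that deleting $M/M_\text{remove}$ from $INT/M_\text{remove}$ and re-inserting $M$ genuinely reproduces $((INT-M)/M_\text{remove})\cup M$. Reconciling these two splits, together with the boundary bookkeeping of the empty extent in the $(EXT_{\star}-\emptyset)$ clause, is where the real work lies; the rest is routine once Lemma 3 is invoked.
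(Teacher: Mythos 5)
Your proposal is correct, but it takes a genuinely different route from the paper: the paper proves this lemma in one sentence, by applying Lemma 2 recursively, one attribute of $M_\text{remove}$ at a time --- precisely the path you rejected --- whereas you generalize the \emph{proof} of Lemma 2 in one shot, comparing $\mathbb{C}_{\star}$ with the column-deleted context $\mathbb{C}_{*}$ of Lemma 3 and transferring $INT_{*}=INT/M_\text{remove}$ and $EXT_{*}\subseteq EXT$. Two points of comparison. First, your reason for rejecting the recursion is overstated: it is in fact available. After the first single-attribute removal, $(\emptyset,M)$ is a concept of every intermediate context, so every later step falls into case 1 of Lemma 2, and the formulas compose cleanly because $M$ can never lie in the image of $\cdot/m_i$ (no $B\subseteq M$ satisfies $B/m_i=M$, as $m_i\in M$); chasing this through yields exactly the two clauses of the lemma, with the first-step case distinction surviving as the case distinction in the statement. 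So the paper's approach does work --- but its word ``trivial'' hides exactly this bookkeeping, which your one-shot argument replaces by an explicit, checkable transfer. Second, in the part you flag as the real work, the resolution is subtler than you anticipate: it is \emph{not} true that in every case ``no collision survives.'' When $(\emptyset, M/M_\text{remove})$ is a concept of $\mathbb{C}_{*}$, indeed no intent $B\neq M$ of $\mathbb{C}$ can include $M/M_\text{remove}$ (such a $B$ would have $B'=\emptyset$ and hence $B=B''=M$), so deleting $M/M_\text{remove}$ from $INT/M_\text{remove}$ and re-inserting $M$ is exact. But when $(\emptyset,M)$ is a concept of $\mathbb{C}$ while $(\emptyset,M/M_\text{remove})$ is \emph{not} a concept of $\mathbb{C}_{*}$, a collision does survive: $(M/M_\text{remove})''$ is an intent of $\mathbb{C}$ distinct from $M$ whose image under $\cdot/M_\text{remove}$ is $M/M_\text{remove}$, and it is precisely this surviving collision that makes $(INT/M_\text{remove})\cup\{M\}$ coincide with $((INT-M)/M_\text{remove})\cup\{M\}$, reconciling your $\mathbb{C}_{*}$-side split with the lemma's $\mathbb{C}$-side split. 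With that correction your plan goes through, and it buys a self-contained argument where the paper offers only an appeal to recursion.
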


Similarly, the proof is trivial once we apply Lemma 2 recursively. Benefit from Lemma 4, the set $INT_{\star}$ can be constructed via Algorithm 2. For the case that we already know that if $(\emptyset, M) \subseteq \mathfrak{B}$, Algorithm 2 may require $INT$ rather than $\mathfrak{B}$ as input.

\begin{algorithm}
\caption{Generating $INT_{\star}$ from $INT$ (for Lemma 4)}
 Input: $\mathbb{C}, M_\text{remove}$ and $\mathfrak{B}$.

 Output: $INT_{\star}$ and $\mathbb{C}_{\star}$.
  \begin{algorithmic}[1]

  \If {$(\emptyset, M) \in \mathfrak{B}$}
  \State $INT_{\star} = ((INT-{M})/M_\text{remove})\cup {M}$
  \Else
  \State $INT_{\star} = (INT/M_\text{remove})\cup {M}$
  \EndIf
\State $\mathbb{C}_{\star}\leftarrow(G, M, I/I_{M_\text{remove}})$
  \end{algorithmic}
\end{algorithm}

\begin{lemma}Given formal contexts $\mathbb{K}_{1}:=(G, M, I)$ and $\mathbb{K}_{2}:=(M, B, J)$ and their corresponding triadic context $\mathbb{K}:=(G, M, B, Y)$. If there exists $b\in B$ such that $M\times  \{b\}\subseteq J$ and $(\emptyset, M)$ is a concept of $\mathbb{K}_{1}$,
the concept lattice of $\mathbb{K}_{1}$ is isomorphic to the ordered set of extents of $\mathbb{K}$.\end{lemma}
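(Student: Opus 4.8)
The plan is to prove the statement by showing that the underlying family of the ordered set of extents of $\mathbb{K}$, namely the set of first components $\{X_1 : (X_1,X_2,X_3)\in\mathfrak{B}(G,M,B,Y)\}$, coincides as a subset of $2^{G}$ (ordered by $\subseteq$) with the extent set $EXT$ of $\mathbb{K}_{1}$. Once this set equality is in hand, the isomorphism is immediate: the canonical map $(A,A')\mapsto A$ is an order isomorphism from $\underline{\mathfrak{B}}(G,M,I)$ onto $(EXT,\subseteq)$, because a dyadic concept is determined by its extent and the order is extent inclusion (Definition 4), while $(EXT,\subseteq)$ is by Definition 8 exactly $\underline{\mathfrak{B}_{\text{extent}}}=(\mathfrak{B}/\sim_{1},\leq_{1})$ once the two families agree. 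Everything therefore reduces to two inclusions.

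For $EXT\subseteq\mathfrak{B}/\sim_{1}$ I would construct, for each concept $(A,A')$ of $\mathbb{K}_{1}$, a triadic concept of $\mathbb{K}$ with extent exactly $A$. The candidate is $(A,\,A',\,S)$ with $S:=\{b\in B : A'\times\{b\}\subseteq J\}$. The hypothesis that some $b$ satisfies $M\times\{b\}\subseteq J$ forces $b\in S$, hence $S\neq\emptyset$; this nonemptiness is the crucial point, since it makes the triadic $(1)$-derivation collapse to the ordinary $\mathbb{K}_{1}$-derivation. Indeed $A'\times S\subseteq J$ by the definition of $S$, so $(A'\times S)^{(1)}=\{g : \forall m\in A',\,(g,m)\in I\}=(A')'=A$, whereas with $S=\emptyset$ the derivation would wrongly give $G$. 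I would then confirm that $(A,A',S)$ is a genuine triadic concept by checking via Definition 6 that each component is the $(i)$-derivation of the product of the other two, i.e. $A=(A'\times S)^{(1)}$, $A'=(A\times S)^{(2)}$ and $S=(A\times A')^{(3)}$; the latter two reduce to dyadic statements and hold because $A\times A'\subseteq I$. The one exceptional case is the bottom concept $(\emptyset,M)$, whose matching triadic concept is $(\emptyset,M,B)$ rather than $(\emptyset,M,S)$: here I would use that $(\emptyset,M)$ is a concept of $\mathbb{K}_{1}$ to get $(M\times B)^{(1)}=M'=\emptyset$, so this triple is a triadic concept of extent $\emptyset$.

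For the reverse inclusion $\mathfrak{B}/\sim_{1}\subseteq EXT$ I would take an arbitrary triadic concept $(X_{1},X_{2},X_{3})$ and settle $X_{1}\in EXT$ by a short case split on the modus. If $X_{3}=\emptyset$, maximality of the extent forces $X_{1}=G$, always an extent. If $X_{3}\neq\emptyset$ and $X_{1}\neq\emptyset$, choosing $g\in X_{1}$ shows $X_{2}\times X_{3}\subseteq J$, the $J$-conditions become vacuous, and $X_{1}=(X_{2}\times X_{3})^{(1)}=X_{2}'$, an extent of $\mathbb{K}_{1}$ since every set of the form $B'$ is an extent. The remaining possibility $X_{1}=\emptyset$ is covered precisely by the hypothesis that $(\emptyset,M)$ is a concept of $\mathbb{K}_{1}$, which is what places $\emptyset$ in $EXT$. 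This inclusion can alternatively be obtained from the earlier machinery: flattening $\mathbb{K}$ to the dyadic context $(G,M\times B,Y^{\flat})$ with $(g,(m,b))\in Y^{\flat}\Leftrightarrow(g,m,b)\in Y$ identifies the triadic extents with the extents of $(G,M\times B,Y^{\flat})$, which is a fully duplicated copy of $\mathbb{K}_{1}$ with the columns $(m,b)\notin J$ emptied; the universal condition $b$ keeps one clean copy of each column of $\mathbb{K}_{1}$ and makes $(\emptyset,\text{all})$ a concept, so Lemma 4(1) delivers $EXT_{\star}\subseteq EXT$.

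The main obstacle I anticipate is not a single calculation but the bookkeeping around the degenerate components, and confirming that both hypotheses are genuinely needed and do different jobs. The universal condition $b$ with $M\times\{b\}\subseteq J$ guarantees that the modus $S$ is nonempty, which is exactly what forces the triadic first-derivation to degenerate to the dyadic one and prevents the construction $(A,A',S)$ from producing a spuriously larger extent. Independently, $(\emptyset,M)$ being a concept of $\mathbb{K}_{1}$ is precisely what is needed to put $\emptyset$ into $EXT$ and thereby match those triadic concepts whose extent is empty. I would take care to verify the exceptional bottom concept is matched correctly, since dropping either hypothesis breaks the set equality and hence the isomorphism.
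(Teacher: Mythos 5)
Your proposal is correct, but it follows a genuinely different route from the paper's proof. Both arguments start with the same reduction: the order isomorphism is recast as the set equality between $EXT_{1}$ and the set of first components of triadic concepts of $\mathbb{K}$. After that the two proofs diverge in both inclusions. For $EXT_{1}\subseteq EXT$ the paper runs an induction over $|B|$, peeling off one condition $b_{n+1}$ at a time and adjusting the modus of the tri-concept provided by the induction hypothesis; you instead give a one-step explicit construction, attaching to each concept $(A,A')$ of $\mathbb{K}_{1}$ the companion tri-concept $(A,A',S)$ with $S=\{b\in B : A'\times\{b\}\subseteq J\}$ (and $(\emptyset,M,B)$ for the bottom concept), using the universal condition $b$ only to guarantee $S\neq\emptyset$ so that the $(1)$-derivation collapses to the dyadic derivation. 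For the converse inclusion the paper passes through conditional contexts and invokes Lemma 4, which is where the hypothesis that $(\emptyset,M)$ is a concept of $\mathbb{K}_{1}$ enters; you argue directly by a case split on the modus: $X_{3}=\emptyset$ forces $X_{1}=G$ by maximality, $X_{3}\neq\emptyset\neq X_{1}$ gives $X_{2}\times X_{3}\subseteq J$ and hence $X_{1}=(X_{2}\times X_{3})^{(1)}=X_{2}'$, and $X_{1}=\emptyset$ is exactly what the $(\emptyset,M)$ hypothesis covers. Your version buys self-containedness and cleaner bookkeeping: the companion tri-concept of every dyadic concept is exhibited explicitly (which is precisely what one wants when reading dyadic concepts off the triadic diagram, as the paper does after Lemma 6), and the degenerate cases that the paper's induction treats only loosely (empty modus, empty extent, $S=\emptyset$) are isolated and dispatched one by one. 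The paper's version buys reuse of machinery: the second inclusion falls out of Lemma 4, which it has already proved and exploits again for the algorithms in the implications section, at the cost of a heavier and somewhat less rigorous induction in the first inclusion.
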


\begin{proof}Let $\underline{\mathfrak{B}_{1}}$ be the concept lattice of $\mathbb{K}_{1}$ and let $\underline{\mathfrak{B}_\text{{extent}}}$ be the ordered set of extents of $\mathbb{K}$.% Then, ``proving the isomorphism of the two ordered sets" is equivalent to ``proving the isomorphism of $\underline{\mathfrak{B}_{1}}$ and $\underline{\mathfrak{B}_\text{{extent}}}"$.%

Recall Definition 4, $\underline{\mathfrak{B}_{1}}:=(\mathfrak{B}_{1},\leq)$.

Recall Definition 8, $\underline{\mathfrak{B}_\text{{extent}}}:=(\mathfrak{B}/\sim_{1},\leq_{1})$.

For $\underline{\mathfrak{B}_{1}}$ and $\underline{\mathfrak{B}_\text{{extent}}}$, both of them are isomorphic to the set of extents ordered by ``$\subseteq$''. Then proving the isomorphism (between $\underline{\mathfrak{B}_{1}}$ and $\underline{\mathfrak{B}_\text{{extent}}}$) can be interpreted as proving the following proposition:

\emph{For a set $A \subseteq G$, $A$ is an extent of the formal context $\mathbb{K}_{1}$ iff $A$ is an extent of the triadic formal context $\mathbb{K}$.
 }

In another word, \emph{ the extent set of formal context $\mathbb{K}_{1}$ is equal to the extent set of the triadic formal context $\mathbb{K}$.}

This interpretation is stricter than the definition of isomorphism. By proving this statement, we can show that we can find all the diadic formal concepts via triadic concept diagram.

Given formal contexts $\mathbb{K}_{1}:=(G, M, I)$ and $\mathbb{K}_{2}:=(M, B, J)$ and their corresponding triadic context $\mathbb{K}:=(G, M, B, Y)$. By assumption, there exists $b \in B$ such that $(m,b)\in J$ for all $m\in M$. Let $EXT_{1}$ denote the extent set of $\mathbb{K}_{1}$ and $EXT$ the extent set of $\mathbb{K}$. We now show $EXT_{1} = EXT$.

1. $E \in EXT_{1} \Rightarrow E \in EXT$.

We use induction over $|B|$ to prove it.

\noindent \textbf{I.B.} Given formal contexts $\mathbb{K}_{1}:=(G, M, I)$ and $\mathbb{K}_{2}:=(M, \{b\}, J)$ and their corresponding triadic context $\mathbb{K}:=(G, M, \{b\}, Y)$. By assumption, $(m,b)\in J$ for all $m\in M$. Let $EXT_{1}$ denote the extent set of $\mathbb{K}_{1}$ and $EXT$ the extent set of $\mathbb{K}$. We now show $E \in EXT_{1} \Rightarrow E \in EXT$.

Assume $E \in EXT_{1}$, then $(E, E')$ is a concept of $\mathbb{K}_{1}$ and $(E, E', \{b\})$ is a tri-concept of $\mathbb{K}$. Therefore $E \in EXT$.

\noindent \textbf{I.H.} Given formal contexts $\mathbb{K}_{1}:=(G, M, I)$ and $\mathbb{K}_{2}:=(M, B, J)$ with $|B|=n>0$. Their corresponding triadic context denoted $\mathbb{K}:=(G, M, B, Y)$. Let $EXT_{1}$ denote the extent set of $\mathbb{K}_{1}$ and $EXT$ the extent set of $\mathbb{K}$. If there exists $b\in B$ such that $(m,b)\in J$ for all $m \in M$, $E \in EXT_{1} \Rightarrow E \in EXT$.

\noindent \textbf{I.C.} Given formal contexts $\mathbb{K}_{1}:=(G, M, I)$ and $\mathbb{K}_{2}:=(M, B, J)$ with $|B|=n+1>1$($B=\{b_{1}, b_{2},\ldots,b_{n+1}\}$). Their corresponding triadic context denoted $\mathbb{K}:=(G, M, B, Y)$. Let $EXT_{1}$ denote the extent set of $\mathbb{K}_{1}$ and $EXT$ the extent set of $\mathbb{K}$. If there exists $b\in B$ such that $(m,b)\in J$ for all $m \in M$, $E \in EXT_{1} \Rightarrow E \in EXT$.

\noindent \textbf{I.S.} Without loss of generality, assume that such $b \in \{b_{1}, b_{2},\ldots , b_{n}\}$. $M_{b_{n+1}}\subseteq M$ denotes the set of attributes satisfying $b_{n+1}$. Given formal contexts $\mathbb{K}_{1}:=(G, M, I)$ and $\mathbb{K}_{2*}:=(M, B/b_{n+1}, J_{*})$, where $J_{*}:=J/ J_{b_{n+1}}$ and $J_{b_{n+1}}:=\{(m_{*}, b_{n+1})$ $|$ for all $m_{*}\in M_{b_{n+1}}\}$. Their corresponding triadic context denoted $\mathbb{K}_{*}:=(G, M, B/b_{n+1}, Y_{*})$, where $Y_{*}:=Y/ Y_{b_{n+1}}$ and $Y_{b_{n+1}}:=\{(g,m_{*}, b_{n+1})$ $|$ for all $m_{*}\in M_{b_{n+1}}$, $(g,m_{*})\in I$\}. $EXT_{*}$ denotes the extent set of $\mathbb{K}_{*}$. By the hypothesis, $E \in EXT_{1} \Rightarrow E \in EXT_{*}$. We now show $E \in EXT_{1} \Rightarrow E \in EXT$.

Assume $E \in EXT_{1}$, then $(E, E')$ is a concept of $\mathbb{K}_{1}$. By the hypothesis, $(E, D, F)$ is a tri-concept of $\mathbb{K}_{*}$. If $(E, D, b_{n+1})\in J$, $(E, D, F \cup b_{n+1})$ is a tri-concept of $\mathbb{K}$. Therefore $E \in EXT$. Otherwise, $(E, D, F)$ is a tri-concept of $\mathbb{K}$. Therefore $E \in EXT$.

2. $E \in EXT \Rightarrow E \in EXT_{1}$.

Assume $E \in EXT$, then there exists a tri-concept $(E, D, F)$ of $\mathbb{K}$. $(E, D)$ can be regarded as a concept of the context under conditions $F$, name it $\mathbb{K}_\text{F}$. As $\mathbb{K}_\text{F}$ can be constructed from $\mathbb{K}_{1}$ by removing the occurrences of the attributes that do not have the conditions $F$, Lemma 4 can be applied on $\mathbb{K}_{1}$ and $\mathbb{K}_\text{F}$. Since $(\emptyset, M)$ is a concept of $\mathbb{K}_{1}$, $E\in EXT_{F}\subseteq EXT_{1}$. Then holds $E \in EXT \Rightarrow E \in EXT_{1}$.\qed\end{proof}

\begin{lemma}Given formal contexts $\mathbb{K}_{1}:=(G, M, I)$ and $\mathbb{K}_{2}:=(M, B, J)$ and their corresponding triadic context $\mathbb{K}:=(G, M, B, Y)$. If there exists $g\in G$ such that $\{g\}\times M\subseteq I$ and $(M, \emptyset)$ is a concept of $\mathbb{K}_{2}$,
the concept lattice of $\mathbb{K}_{2}$ is isomorphic to the ordered set of modi of $\mathbb{K}$.\end{lemma}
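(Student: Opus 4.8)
The plan is to prove the statement as the mirror image of Lemma 5, exploiting the symmetry of the triadic construction that exchanges objects and conditions. Concretely, I would first note that reversing the first and third coordinates turns $\mathbb{K}=(G,M,B,Y)$ into the triadic context $\mathbb{K}':=(B,M,G,Y')$ with $(b,m,g)\in Y'\Leftrightarrow(g,m,b)\in Y$, and that $\mathbb{K}'$ is precisely the corresponding triadic context (in the sense of Definition 9) of the two dyadic contexts $(B,M,J^{-1})$ and $(M,G,I^{-1})$. Under this reversal the modi of $\mathbb{K}$ become the extents of $\mathbb{K}'$, and the two hypotheses translate exactly into the hypotheses of Lemma 5 for $\mathbb{K}'$: the universal object $g\in G$ with $\{g\}\times M\subseteq I$ becomes a condition shared by every attribute of $(B,M,J^{-1})$, and ``$(M,\emptyset)$ is a concept of $\mathbb{K}_{2}$'' becomes ``$(\emptyset,M)$ is a concept of $(B,M,J^{-1})$''. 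Thus most of the combinatorial work can be transported from Lemma 5 rather than redone; alternatively one can set up a direct induction on $|G|$ parallelling the induction on $|B|$ used there.

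Following the reduction that Lemma 5 uses, I would restate the goal as a set equality: the modus set of $\mathbb{K}$ equals the intent set $INT_{2}$ of $\mathbb{K}_{2}$ (both are families of subsets of $B$), where from now on $'$ denotes the derivation operator of $\mathbb{K}_{2}$. For $INT_{2}\subseteq$ modi, take an intent $Q$ of $\mathbb{K}_{2}$ with extent $P=Q'\subseteq M$ and form the box $X_{1}=\{g\in G:\{g\}\times P\subseteq I\}$, $X_{2}=P$, $X_{3}=Q$. The universal-object hypothesis puts $g$ into $X_{1}$, so $X_{1}\neq\emptyset$; this nonemptiness is exactly what forces the $(3)$-derivation of $(X_{1},X_{2})$ to return $P'=Q$ instead of collapsing onto all of $B$, and a short check with the $(i)$-derivation operators of Definition 6 shows all three components are maximal, so $Q$ is a modus. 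The degenerate intents $B$ and $\emptyset$ are realized by the tri-concepts $(G,\emptyset,B)$ and $(X_{1},M,\emptyset)$; here the assumption that $(M,\emptyset)$ is a concept of $\mathbb{K}_{2}$ (equivalently $M'=\emptyset$) is what makes $\emptyset$ a genuine modus. For the reverse inclusion I would take an arbitrary tri-concept $(X_{1},X_{2},X_{3})$: in the principal case where $X_{1},X_{2},X_{3}$ are all nonempty, $X_{1}\times X_{2}\subseteq I$ and maximality give $X_{3}=X_{2}'$, an intent of $\mathbb{K}_{2}$, while the two degenerate cases give modus $B$ or $\emptyset$. This is the analogue of direction 2 of Lemma 5, and it is here that I would invoke Lemma 4 (applied to $\mathbb{K}_{2}$ with its objects restricted according to $X_{1}$) together with $(M,\emptyset)\in\mathfrak{B}(\mathbb{K}_{2})$, dual to how Lemma 5 used $(\emptyset,M)\in\mathfrak{B}(\mathbb{K}_{1})$.

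The step I expect to be the real obstacle is not the set equality but the matching of the two orders. In Lemma 5 the extents of $\mathbb{K}_{1}$ and of $\mathbb{K}$ live on the same ground set $G$ and are both ordered by inclusion, so the set equality instantly yields the order isomorphism. Here the modi live on $B$ while the concept lattice of $\mathbb{K}_{2}$ is carried by its extents, subsets of $M$; the common ground set $B$ enters only through the intents, and the correspondence $(P,Q)\mapsto Q$ is antitone because $'$ is antitone. Consequently the set equality identifies the ordered set of modi $(\{X_{3}\},\subseteq)$ with the intents ordered by inclusion, whereas the concept lattice of $\mathbb{K}_{2}$ is isomorphic to the intents ordered by reverse inclusion; transporting via the transpose in the symmetry argument exhibits the same phenomenon, since $(B,M,J^{-1})$ has the dual concept lattice of $\mathbb{K}_{2}$. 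Reconciling these two directions is the crux, and I would treat it with care: it forces one to track whether the intended map is order preserving or order reversing, and it is exactly the point at which the argument must either produce a genuine order isomorphism or be stated as a duality. I would therefore spend most of the effort making the order comparison explicit through the antitone Galois connection of $\mathbb{K}_{2}$ before concluding that the diagrams correspond.
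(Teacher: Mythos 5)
Your first paragraph is, in essence, the paper's entire proof: the paper disposes of this lemma in two lines by passing to the dual contexts $\mathbb{K}_{1}^{-1}:=(M,G,I^{-1})$, $\mathbb{K}_{2}^{-1}:=(B,M,J^{-1})$, $\mathbb{K}^{-1}:=(B,M,G,Y^{-1})$ and declaring the problem reduced to Lemma 5 --- exactly the transposition you describe, with exactly your translation of the two hypotheses. Your second paragraph, the direct verification that the modi of $\mathbb{K}$ coincide as a family of subsets of $B$ with the intents of $\mathbb{K}_{2}$ (using the universal object to keep $X_{1}$ nonempty, and $(M,\emptyset)\in\mathfrak{B}(\mathbb{K}_{2})$ to make $\emptyset$ a genuine modus), is a sound expansion of what that reduction gives and parallels how the paper proves Lemma 5.

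The concern in your third paragraph is not an obstacle of your own making; it is a real defect that the paper's terse proof silently passes over, and you are right that it is the crux. The reduction identifies the ordered set of modi of $\mathbb{K}$ (ordered by inclusion, per Definition 8) with the concept lattice of $\mathbb{K}_{2}^{-1}$, which is the \emph{order dual} of $\underline{\mathfrak{B}}(\mathbb{K}_{2})$; equivalently, the natural bijection $(P,Q)\mapsto Q$ is antitone. The two posets need not be isomorphic. For a concrete witness, take $G=\{g\}$ with $(g,m)\in I$ for all $m$, $M=\{1,2,3\}$, $B=\{\alpha,\beta,\gamma\}$, and $J=\{(1,\alpha),(1,\gamma),(2,\beta),(2,\gamma)\}$; both hypotheses of the lemma hold, the concept lattice of $\mathbb{K}_{2}$ is the five-element lattice with two atoms and one coatom, while the modi of $\mathbb{K}$ ordered by inclusion form the chainlike lattice $\emptyset\subset\{\gamma\}\subset\{\alpha,\gamma\},\{\beta,\gamma\}\subset B$ with one atom and two coatoms: dually isomorphic, but not isomorphic. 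So the lemma as literally stated holds only if ``isomorphic'' is read as ``dually isomorphic'' (or the modi are ordered by reverse inclusion), and a complete write-up must say so. Your insistence on tracking whether the map is order preserving or order reversing, rather than passing silently between a context and its transpose as the paper does, is precisely what a correct proof requires.
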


The prove is trivial. We only need to consider the dual contexts $\mathbb{K}_{1}^{-1}:=(M, G, I^{-1})$, $\mathbb{K}_{2}^{-1}:=(B, M, J^{-1})$ and $\mathbb{K}^{-1}:=(B, M, G, Y^{-1})$, then the problem is interpreted to the previous one, which we already proved.

As a conclusion, given formal contexts $\mathbb{K}_{1}:=(G, M, I)$ and $\mathbb{K}_{2}:=(M, B, J)$ and their corresponding triadic context $\mathbb{K}:=(G, M, B, Y)$, after adding a new meta-attribute who holds for all attributes\footnote{For $\mathbb{K}_{2}:=(M, B, J)$, $M$ is the set of attributes, $B$ is the set of meta-attributes.} for $\mathbb{K}_{2}$ and an attribute, with which $(\emptyset, M)$ is for sure a concept of $\mathbb{K}_{1}$, who holds for no object for $\mathbb{K}_{1}$, $EXT \equiv EXT_{1}$ holds. Each circle on the extent diagram stands for an extent $G_{0}$ of $\mathfrak{B}_{1}$. For each node, we search on its corresponding line (which stands for its equivalence class) on the triangular pattern. For each circle\footnote{The circle stands for $(G, M, \emptyset)$, if exists, should not be concerned, since it follows from the definition of triadic concept rather then the relations in $\mathbb{K}_{1}$.} on this line, we find its corresponding attributes, the set of all the relevant attributes $M_{0}$ is the intent of $G_{0}$. This is how we find all the formal concepts $\mathfrak{B}_{1}$.

Dually, after adding a new object who has all the attributes for $\mathbb{K}_{1}$ and a meta-attribute that holds for no attribute for $\mathbb{K}_{2}$, we can find all the formal concepts of $\mathbb{K}_{2}$ via the modus diagram of $\mathbb{K}$. Moreover, for a set $B_{0} \subseteq B$, $B_{0}$ is an intent of the formal context $\mathbb{K}_{2}$ iff $B_{0}$ is a modus of the triadic formal context $\mathbb{K}$.

One should notice that the preconditions for Lemma 5 and Lemma 6 conflict themselves, which means we can not always maintain the isomorphism for both $\mathbb{K}_{1}$ and $\mathbb{K}_{2}$.
\section{Implications}

In this chapter, we start with the definition of \emph{implications} in FCA. Then we introduce \emph{triadic implications} and \emph{conditional attribute implications} in triadic formal contexts and use  \emph{Next Closure algorithm} to compute triadic implications and conditional attribute implications.

\subsection{Implications in FCA}

For the cases that attributes are given while objects are not known, or there are too many attributes to be handled completely. We then have to study the possibly ``representative'' attribute
combinations, which is the very reason that the definition of implication (between attributes) is introduced.

\begin{definition}An implication $X \rightarrow Y$ holds in a context, if every object that has all attributes from X also has all attributes from Y.\end{definition}

\begin{definition}A set $\mathcal{L}$ of implications of a context $(G,M, I)$ is called complete, if
every implication that holds in $(G,M, I)$ follows from $\mathcal{L}$.
A set $\mathcal{L}$ of implications is called non-redundant if no implication in $\mathcal{L}$
follows from other implications in $\mathcal{L}$.\end{definition}

From the definition of implication, we can infer that an implication $X \rightarrow Y$ holds in a context, iff $Y\subseteq X''$$(X'\subseteq Y')$. It is convenient if we can compute a complete and non-redundant set of implications.

\begin{definition}An implication $A \rightarrow B$ follows (semantically) from a set $\mathcal{L}$ of
implications in $M$ if each subset of $M$ respecting $\mathcal{L}$ also respects $A \rightarrow B$.
A family of implications is called closed if every implication following from
$\mathcal{L}$ is already contained in $\mathcal{L}$.\end{definition}

\begin{proposition}If $\mathcal{L}$ is a set of implications in $M$, then
\begin{center}
\rm{Mod}$(\mathcal{L})$ := $\{T \subseteq M$ $|$ $T$ $respects$ $\mathcal{L}\}$
\end{center}
is a closure system on $M$. If $\mathcal{L}$ is the set of all implications of a context, then \rm{Mod}$(\mathcal{L})$
is the system of all concept intents.\end{proposition}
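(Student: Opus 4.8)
The plan is to prove the two assertions separately. For the first, I would recall that a \emph{closure system} on $M$ is a family of subsets that contains $M$ and is closed under arbitrary intersections, and then verify both conditions directly from the definition of ``respects''. That $M \in \mathrm{Mod}(\mathcal{L})$ is immediate: for any implication $A \to B$ we have $B \subseteq M$, so $M$ trivially respects it. For closure under intersection, I would take an arbitrary family $(T_i)_{i \in J}$ in $\mathrm{Mod}(\mathcal{L})$ and an implication $A \to B \in \mathcal{L}$; assuming $A \subseteq \bigcap_i T_i$ gives $A \subseteq T_i$ for every $i$, whence $B \subseteq T_i$ since each $T_i$ respects the implication, and therefore $B \subseteq \bigcap_i T_i$. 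Thus $\bigcap_i T_i$ respects $\mathcal{L}$, establishing the first claim.

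For the second assertion, suppose $\mathcal{L}$ is the set of \emph{all} implications of $(G, M, I)$; I would prove that $\mathrm{Mod}(\mathcal{L})$ coincides with the set of concept intents $\{T \subseteq M \mid T'' = T\}$ by a double inclusion, using throughout the characterisation stated earlier that $X \to Y$ holds in the context iff $Y \subseteq X''$, together with the standard Galois properties of $\cdot'$ (extensivity $T \subseteq T''$ and monotonicity). For the inclusion of intents into $\mathrm{Mod}(\mathcal{L})$, I would take an intent $T = T''$ and an arbitrary implication $A \to B$ that holds; from $A \subseteq T$ monotonicity gives $A'' \subseteq T'' = T$, and since $B \subseteq A''$ we obtain $B \subseteq T$, so $T$ respects every implication and lies in $\mathrm{Mod}(\mathcal{L})$.

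The converse inclusion is where the one genuinely non-routine idea enters, and I expect it to be the main obstacle. Given $T \in \mathrm{Mod}(\mathcal{L})$ I must show $T = T''$. Extensivity already gives $T \subseteq T''$, so only $T'' \subseteq T$ remains. The key move is to consider the \emph{self-referential} implication $T \to T''$: since $T'' \subseteq T''$ trivially, the characterisation tells us that $T \to T''$ actually holds in the context, hence lies in $\mathcal{L}$ because $\mathcal{L}$ contains \emph{all} implications. As $T$ respects $\mathcal{L}$ and $T \subseteq T$, respecting this particular implication forces $T'' \subseteq T$. Combining the two inclusions yields $T'' = T$, so $T$ is a concept intent. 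The delicate point to get right is precisely that $\mathcal{L}$ is assumed to be the full implication set, so that $T \to T''$ is guaranteed to be available; were $\mathcal{L}$ only assumed \emph{complete} in the sense of Definition 14 rather than literally the set of all implications, this step would instead require the short observation that respecting $\mathcal{L}$ entails respecting every implication that follows from it, and in particular $T \to T''$.
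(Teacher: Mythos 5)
Your proposal is correct: both the verification that $\mathrm{Mod}(\mathcal{L})$ contains $M$ and is closed under arbitrary intersections, and the double inclusion with concept intents (using $Y \subseteq X''$ as the criterion for an implication to hold, and the self-referential implication $T \to T''$ for the non-trivial direction), are sound. The paper itself states this proposition as background and gives no proof of it (it is a standard result from the FCA literature, cf.\ Ganter and Wille), and your argument is essentially the standard textbook one, so there is nothing in the paper to contrast it against.
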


The respective closure operator
\begin{center}
$X \mapsto \mathcal{L}(X)$
\end{center}
can be described as follows: For a set $X \subseteq M$, let
\begin{center}
$X^\mathcal{L} := X \cup \bigcup \{B$ $|$ $A \rightarrow B \in \mathcal{L},$ $A \subseteq X\}$.
\end{center}
Form the sets $X^\mathcal{L}$, $X^{\mathcal{L}\mathcal{L}}$, $X^{\mathcal{L}\mathcal{L}\mathcal{L}}$, . . . until a set $\mathcal{L}(X)$ := $X^{\mathcal{L}...\mathcal{L}}$ is obtained with
$\mathcal{L}(X)^\mathcal{L}$ = $\mathcal{L}(X)$. $\mathcal{L}(X)$ is then the
closure of $X$ for the closure system \rm{Mod}$(\mathcal{L})$.

\begin{definition}$P\subseteq M$ is called a pseudo intent(pseudo closed) of $(G, M, I)$, if $P \neq P''$, and if $Q\subsetneq P$ is a pseudo intent, then $Q''\subseteq P$.\end{definition}

\begin{theorem}The set of implications $\mathcal{L}:=\{P\rightarrow$ $P''$ $|$ $P$ is a pseudo intent$\}$ is non-redundant and complete. We call $\mathcal{L}$ the stem base.\end{theorem}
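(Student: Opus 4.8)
The plan is to split the statement into its two assertions, \textbf{completeness} and \textbf{non-redundancy}, and to treat them separately. For completeness, the central goal is the equivalence
\[
T \text{ respects } \mathcal{L} \quad\Longleftrightarrow\quad T = T'' ,
\]
i.e.\ that the models of $\mathcal{L}$ are exactly the concept intents. Once this is established, the Proposition above identifies $\mathrm{Mod}(\mathcal{L})$ with the system of all intents, and completeness follows routinely from Definition 13 together with the criterion that $A\to B$ holds iff $B\subseteq A''$.

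First I would dispatch the easy inclusion: every intent respects $\mathcal{L}$. If $T=T''$ and $P\to P''\in\mathcal{L}$ with $P\subseteq T$, then monotonicity of the derivation operator gives $P''\subseteq T''=T$, so $T$ respects $P\to P''$. The converse inclusion, that every model of $\mathcal{L}$ is an intent, is where I expect the main obstacle, and I would prove it by exploiting the recursive definition of pseudo intent (Definition 14). Suppose $T$ respects $\mathcal{L}$ but $T\neq T''$. If $T$ were itself a pseudo intent, then $T\to T''\in\mathcal{L}$ would force $T''\subseteq T$, hence $T=T''$, a contradiction. If $T$ is not a pseudo intent, then since $T\neq T''$ the negation of Definition 14 yields a pseudo intent $Q\subsetneq T$ with $Q''\not\subseteq T$; but $Q\to Q''\in\mathcal{L}$ and $Q\subseteq T$ force $Q''\subseteq T$, again a contradiction. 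Hence $T=T''$. With the lemma in hand, completeness is immediate: if $A\to B$ holds then $B\subseteq A''$, and for any model $T\supseteq A$ we get $A''\subseteq T''=T$, so $B\subseteq T$; thus $A\to B$ is respected by every model and therefore follows from $\mathcal{L}$.

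For non-redundancy I would, for each pseudo intent $P$, exhibit the set $P$ itself as a countermodel showing that $P\to P''$ does not follow from $\mathcal{L}\setminus\{P\to P''\}$. Since $P\neq P''$ while always $P\subseteq P''$, we have $P\subsetneq P''$, so $P\subseteq P$ but $P''\not\subseteq P$, i.e.\ $P$ violates $P\to P''$. On the other hand, for any other implication $Q\to Q''\in\mathcal{L}$ with $Q\subseteq P$, we have $Q\subsetneq P$, so the defining clause of Definition 14 applied to the pseudo intent $P$ gives $Q''\subseteq P$; hence $P$ respects every implication of $\mathcal{L}$ except $P\to P''$. By Definition 13, $P\to P''$ does not follow from the remaining implications, which is exactly non-redundancy.

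In summary, the whole argument reduces to monotonicity of the derivation operator plus careful bookkeeping with Definitions 13 and 14. The only genuinely delicate point is the model-equals-intent lemma, and specifically the clean exploitation of the recursive pseudo-intent definition to produce a contradiction; everything else is straightforward set manipulation.
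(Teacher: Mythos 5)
Your proposal is correct. Note, however, that the paper itself contains no proof of Theorem 1 to compare against: it explicitly omits the argument and refers the reader to the cited literature. What you have reconstructed blind is precisely the classical Guigues--Duquenne argument as it appears in those references: first, $\mathrm{Mod}(\mathcal{L})$ equals the set of concept intents, where the easy inclusion is monotonicity of $\cdot''$ and the hard inclusion splits on whether the model $T$ with $T\neq T''$ is itself a pseudo intent (immediate contradiction) or not (the negation of the second clause of Definition 14 produces a pseudo intent $Q\subsetneq T$ with $Q''\not\subseteq T$, contradicting that $T$ respects $Q\to Q''$); second, completeness follows from the criterion $A\to B$ holds iff $B\subseteq A''$; third, non-redundancy is witnessed by taking the pseudo intent $P$ itself as a countermodel, using the defining clause of Definition 14 to check that $P$ respects every other implication of $\mathcal{L}$. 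All three steps are sound. The single caveat worth recording is that the recursive Definition 14 is well founded only when $M$ is finite (or satisfies an appropriate chain condition); this finiteness hypothesis is implicit in the paper's statement and in your argument alike, and without it the theorem can fail.
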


The proof of Theorem 1 is omitted here, curious readers may check the relevant papers for details.\cite{ganter2012book}\cite{ganter2002formal}

\subsection{Triadic Implications in Triadic Formal Contexts}

There are different ways to define implications in a triadic formal context. In this section, triadic implications, which is mentioned in the dissertation of Ganter and Obiedkov\cite{ganter2004implications}, will be discussed. %We aim at introducing an efficient way to compute triadic implications and conditional implications.

Before we look into the details, the definition of conditional context is specified since it essential for later discussions.

\begin{definition}$\mathbb{K}_{C}:=\{G, M, I_{C}\}$ is a conditional context of $\mathbb{K}:=\{G, M, B, Y\}$ relative to conditions $C\subseteq B$ iff $\mathbb{K}_{C}$ satisfies the following property:

%For $x_{1}\in G$, $x_{2}\in M$ and $ C:=\{c_{1}, c_{2},..., c_{n}| c_{i}, c_{j}\in C, c_{i}\neq c_{j}, 1\leqslant i, j\leqslant n\}$, $(x_{1}, x_{2}, c_{i})\in Y$ holds for $1\leqslant i\leqslant n$ iff $(x_{1}, x_{2})\in I_{C}$.

For $x_{1}\in G$, $x_{2}\in M$ and $ C\subseteq B$, $x_{1}\times x_{2}\times C\subseteq Y$ iff $x_{1}\times x_{2}\in I_{C}$.

\end{definition}

Benefit from this definition, we can focus on the diadic relations between objects and attributes under certain set of conditions and use the knowledge in diadic formal context for analysing.

%As a type of triadic implications, conditional attribute implication is mentioned and analysed by in Ganter and Obiedkov\cite{ganter2004implications}. In this article,
%\subsubsection{Triadic Implications in Triadic Formal Contexts}
%\begin{definition}A conditional (attribute) implication $R \xrightarrow{C} S$ holds in a triadic context $\mathbb{K}$, iff for each condition $c\in C$ it holds that if an object $g\in G$ has all the attributes in $R$, it also has all the attributes in $S$.\end{definition}

\begin{definition}A triadic implication $(R \rightarrow S)_{C}$ holds in a triadic context $\mathbb{K}$, iff each object $g\in G$ having all the attributes in $R$ under all conditions in $C$ has all the attributes in $S$ under all conditions in $C$ as well.
\end{definition}

%$R \xrightarrow{C} S$ is read as ``$R$ implies $S$ under all conditions from $C$''.\footnote{In our case, may be read as ``$R$ implies $S$ under all meta-attributes from $C$''.}

%Now we can define the stem base with respect to triadic implications for a triadic context.
Inspired by the definition of triadic implication, Lemma 7 specifies how we can define and compute the stem base with respect to the triadic implications.

\begin{lemma}The stem base with respect to triadic implications for $\mathbb{K}:=\{G, M, B, Y\}$ can be defined as the set of stem bases for every conditional context $\mathbb{K}_{C}$ where $C\subseteq B$.
\end{lemma}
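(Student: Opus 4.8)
The plan is to reduce the statement to Theorem~1 by treating the conditional contexts one condition set at a time. The crucial bridge is that the triadic implication $(R \rightarrow S)_{C}$ asserts nothing more than the dyadic implication $R \rightarrow S$ in the conditional context $\mathbb{K}_{C}$. Unfolding Definition~14 and Definition~16, an object $g$ has all attributes of $R$ under all conditions in $C$ exactly when $g$ has all attributes of $R$ in $\mathbb{K}_{C}$, and likewise for $S$; hence $(R \rightarrow S)_{C}$ holds in $\mathbb{K}$ if and only if $R \rightarrow S$ holds in $\mathbb{K}_{C}$. I would state and prove this equivalence first, since every later step rests on it.

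Next, for each fixed $C \subseteq B$ I would apply Theorem~1 to the dyadic context $\mathbb{K}_{C}$, obtaining its stem base $\mathcal{L}_{C} := \{P \rightarrow P'' \mid P \text{ a pseudo intent of } \mathbb{K}_{C}\}$ (where $''$ denotes the derivation in $\mathbb{K}_{C}$), which is complete and non-redundant for the implications of $\mathbb{K}_{C}$. Completeness of the whole collection then follows immediately: any triadic implication $(R \rightarrow S)_{C}$ holding in $\mathbb{K}$ corresponds by the bridge to an implication of $\mathbb{K}_{C}$, which by completeness of $\mathcal{L}_{C}$ follows from $\mathcal{L}_{C}$; reattaching the tag $C$ shows that $(R \rightarrow S)_{C}$ follows from the $C$-component of $\bigcup_{C \subseteq B} \mathcal{L}_{C}$, hence from the collection as a whole.

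For non-redundancy I would argue within each condition set separately. Fix $C$ and a member $(P \rightarrow P'')_{C}$ of $\mathcal{L}_{C}$. Since $\mathcal{L}_{C}$ is non-redundant in $\mathbb{K}_{C}$ by Theorem~1, there is a subset of $M$ respecting $\mathcal{L}_{C}$ minus $\{P \rightarrow P''\}$ but violating $P \rightarrow P''$; I would lift this witness to a family of attribute sets, one per condition, that agrees with it on the $C$-block and imposes no constraint (the full set $M$, say) on every other condition set, so that the lifted family respects every implication of the collection except $(P \rightarrow P'')_{C}$. This shows that member cannot follow from the others.

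The step I expect to be the main obstacle is exactly this cross-condition part of non-redundancy, because it forces me to commit to a precise notion of semantic entailment for triadic implications and to verify that implications carrying distinct condition tags do not interfere. The delicate point is that genuine triadic models are monotone, in that an attribute holding under a larger condition set also holds under every smaller one, so the per-condition models are not a priori independent. I would therefore make explicit that triadic implications are read condition-wise, treating the family $\{\mathcal{L}_{C}\}_{C \subseteq B}$ as a disjoint union of $2^{|B|}$ independent dyadic stem bases; this is precisely the reading under which Definition~16 was stated, and under which both completeness and non-redundancy reduce cleanly to $2^{|B|}$ applications of Theorem~1.
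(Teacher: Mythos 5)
Your proposal is correct and shares the paper's overall skeleton: the same bridge equivalence (that $(R \rightarrow S)_{C}$ holds in $\mathbb{K}$ iff $R \rightarrow S$ holds in the conditional context $\mathbb{K}_{C}$), followed by completeness obtained condition set by condition set from Theorem~1. Where you genuinely diverge is non-redundancy. The paper argues by cases on whether distinct condition sets $C_{1}, C_{2}, \ldots$ can yield literally identical conditional contexts, and in both cases merely \emph{asserts} that stem bases carrying different condition tags ``can not be inferred from each other since they are from different diadic contexts''; no separating model is exhibited. You instead give a uniform model-theoretic argument: take the dyadic countermodel that Theorem~1 guarantees for $\mathbb{K}_{C}$ minus the implication in question, and lift it to a family of attribute sets indexed by condition sets, equal to the witness on the $C$-block and to the full set $M$ elsewhere. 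This buys actual rigor, and you correctly isolate the one point on which both proofs silently rest: such a lifted family need not be realizable by a genuine triadic context, because realizability forces $I_{C'} \subseteq I_{C}$ whenever $C \subseteq C'$, which your witness (full $M$ outside the $C$-block) violates for $C' \supseteq C$. Hence non-redundancy holds only under the condition-wise, disjoint-union semantics of entailment that you state explicitly; under a semantics quantifying over genuine triadic contexts it would need a different witness, and the paper's version of the argument would have the same gap. Making that semantic commitment explicit is a real improvement over the paper, which leaves it implicit in the phrase ``from the aspect of triadic implications.''
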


\begin{proof}
The following property is first proved to show that there exists a bijective relationship between the triadic implications for $\mathbb{K}:=\{G, M, B, Y\}$ and the implications for every conditional context $\mathbb{K}_{C}$ where $C\subseteq B$.

\emph{``For $R, S\subseteq M$%and $ C:=\{c_{1}, c_{2},..., c_{n}| c_{i}, c_{j}\in C, c_{i}\neq c_{j}, 1\leqslant i, j\leqslant n\}$
, $(R \rightarrow S)_{C}$ holds in $\mathbb{K}:=\{G, M, B, Y\}$ iff $R \rightarrow S$ holds in $\mathbb{K}_{C}:=\{G, M, I_{C}\}$."}

1. $R \xrightarrow{C} S$ holds in $\mathbb{K}$ $\Rightarrow$ $R \rightarrow S$ holds in $\mathbb{K}_{C}$.

By Definition 6 and Definition 16, ``$(R \rightarrow S)_{C}$ holds in $\mathbb{K}$'' %$\Rightarrow$ ${(R \times S)}^{(1)}={(S \times C)}^{(1)}$, which means
means that ``the set of objects having all attributes of $R$ under all conditions of $C$ also has all attributes in $S$ under all conditions of $C$ in $\mathbb{K}$", without loss of generality, this set of objects is denoted $G_{1}$. In another word, ${G_{1}=(R \times C)}^{(1)}\subseteq {(S \times C)}^{(1)}$, which means $G_{1} \times R \times C\subseteq Y$ and $G_{1} \times S \times C\subseteq Y$. According to Definition 15, ``$G_{1} \times R \times C\subseteq Y$ in $\mathbb{K}$'' implies ``$G_{1} \times R\subseteq I$ in $\mathbb{K}_{C}$''. For the same reason, ``$G_{1} \times S \times C\subseteq Y$ in $\mathbb{K}$'' implies ``$G_{1} \times S\subseteq I$ in $\mathbb{K}_{C}$''. If $G_{1} = R'\subseteq S'$ hold in $\mathbb{K}_{C}$, then follows $R \rightarrow S$ holds in $\mathbb{K}_{C}$.

It is clear that $G_{1} \subseteq S'$ since $G_{1} \times S\subseteq I$. Assume $R' = G_{2}$ and $G_{1}\neq G_{2}$, then $G_{1}\subseteq G_{2}$ and there exists $g\in G_{2}$ and $g\notin G_{1}$. Then follows that $g\times R\subseteq I$. By Definition 15, $g\times R\subseteq I$ implies $g\times R\times C\subseteq Y$, which means $g\in (R\times C)^{(1)}$. However, we already know that $g\notin G_{1}$, conflict occurs. Therefore, $R' = G_{1}$.

Since $G_{1} = R'\subseteq S'$ holds in $\mathbb{K}_{C}$, $R \rightarrow S$ holds in $\mathbb{K}_{C}$ as well.

2.$R \rightarrow S$ holds in $\mathbb{K}_{C}$ $\Rightarrow$ $(R \rightarrow S)_{C}$ holds in $\mathbb{K}$.

By Definition 11, ``$R \rightarrow S$ holds in $\mathbb{K}_{C}$'' means that ``the set of objects having all attributes of R also has all attributes of S in $\mathbb{K}_{C}$", without loss of generality, this set of objects is denoted $G_{1}$. In another word, $G_{1}=R'\subseteq S'$, which means $G_{1} \times R\subseteq I$ and $G_{1} \times S \subseteq I$. According to Definition 15, ``$G_{1} \times R\subseteq I$ in $\mathbb{K}_{C}$'' implies ``$G_{1} \times R \times C\subseteq Y$ in $\mathbb{K}$''. For the same reason, ``$G_{1} \times S\subseteq I$ in $\mathbb{K}_{C}$'' implies ``$G_{1} \times S \times C\subseteq Y$ in $\mathbb{K}$''. If ${G_{1}=(R \times C)}^{(1)}\subseteq {(S \times C)}^{(1)}$, then follows $R \xrightarrow{C} S$ holds in $\mathbb{K}$.

It is clear that $G_{1} \subseteq {(S \times C)}^{(1)}$ since $G_{1} \times S\times C\subseteq Y$. Assume ${(R \times C)}^{(1)} = G_{2}$ and $G_{1}\neq G_{2}$, then $G_{1}\subseteq G_{2}$ and there exists $g\in G_{2}$ and $g\notin G_{1}$. Then follows that $g\times R\times C\subseteq Y$. By Definition 15, $g\times R\times C\subseteq Y$ implies $g\times R\subseteq I$, which means $g\in R'$. However, we already know that $g\notin G_{1}$, conflict occurs. Therefore, ${(R \times C)}^{(1)} = G_{1}$.

Since ${(R \times C)}^{(1)} = G_{1}\subseteq {(S \times C)}^{(1)}$ holds in $\mathbb{K}$, $(R \rightarrow S)_{C}$ holds in $\mathbb{K}$ as well.

Moreover, the stem base should be complete and non-redundant.

Since the stem base of a conditional context can infer all its implications, the set of stem bases for every conditional context is surely complete considering the above mentioned property. As for the proof of its non-redundance, we need to consider two cases. For the case that there does not exist $C_{1}, C_{2}\subseteq B$ such that the conditional contexts $\mathbb{K}_{C_{1}}$ and $\mathbb{K}_{C_{2}}$ are exactly the same, since each conditional context are not replaceable, such stem base for $\mathbb{K}$ is non-redundant. For the rest case that there exists $C_{1}, C_{2}... C_{n}\subseteq B$ such that the conditional contexts $\mathbb{K}_{C_{1}}$, $\mathbb{K}_{C_{2}}$... $\mathbb{K}_{C_{n}}$ are exactly the same, although the stem bases for $\mathbb{K}_{C_{1}}$, $\mathbb{K}_{C_{2}}$... $\mathbb{K}_{C_{n}}$ are the same as well (and seems to be redundant), from the aspect of triadic implications, the stem bases are different sets of triadic implications under $C_{1}, C_{2}... C_{n}$ and can not be inferred from each other since they are from different diadic contexts. Therefore the stem base for $\mathbb{K}$ is non-redundant.\qed\end{proof}

For simplicity, we name the stem base for a conditional context under conditions $C$ the conditional stem base under conditions $C$. Lemma 7 doesn't violate Theorem 1. The stem base introduced in Lemma 7 is complete and non-redundant.

Can we compute the stem base with respect to the triadic implications for $\mathbb{K}$ from the stem bases of $\mathbb{K}_{1}$ and $\mathbb{K}_{2}$ since we use $\mathbb{K}_{1}$ and $\mathbb{K}_{2}$ to construct the triadic context $\mathbb{K}$? Unfortunately no. Table 5 gives us an counter example. Case 1 and case 2 are formal contexts for attributes and meta-attributes, they share the same set of implications $\{\alpha\rightarrow \beta\}$ for meta-attributes, since case 2 is constructed by just exchanging the meta-attributes hold for attributes 2 and 3. However, the corresponding triadic contexts(see Table 6) have different triadic implications. For case 2, the triadic implication is $(1\rightarrow 2)_{\beta}$ while this triadic implication does not satisfy case 1.

\begin{table}[!htb]

\begin{center}
\begin{cxt}%
\cxtName{\tiny{\backslashbox{Objects}{Attributes}}}%
\att{1}%
\att{2}%
\att{3}%
\obj{xx.}{$\color{white}{wwww}$a}
\obj{.xx}{$\color{white}{wwww}$b}
\obj{..x}{$\color{white}{wwww}$c}
\end{cxt}
\end{center}
\begin{center}
\begin{cxt}%
\cxtName{Case 1}%
\att{$\alpha$}%
\att{$\beta$}%
\att{$\gamma$}%
\obj{xx.}{$\color{white}{ff}$1}
\obj{..x}{$\color{white}{ff}$2}
\obj{.x.}{$\color{white}{ff}$3}

\end{cxt}
\begin{cxt}%
\centering
\cxtName{Case 2}%
\att{$\alpha$}%
\att{$\beta$}%
\att{$\gamma$}%
\obj{xx.}{$\color{white}{ff}$1}
\obj{.x.}{$\color{white}{ff}$2}
\obj{..x}{$\color{white}{ff}$3}

\end{cxt}

\end{center}
\caption{Case 1 and Case 2 share the same stem base.}

\end{table}
\begin{table}[!htb]
  \centering
  \begin{tabular}{|c||c|c|c|c|c|c|c|c|c|}

\hline
Case 1
& \multicolumn{2}{r}{$\alpha$} &

& \multicolumn{2}{r}{$\beta$}&&

\multicolumn{2}{r}{$\gamma$}&\\
\hline

\tiny{\backslashbox{Objects}{Attributes}}& 1 & 2 & 3 & 1 & 2 & 3 & 1 & 2 & 3    \\
 \hline
\hline
 a&$\times$&$\color{white}{w} $&$\color{white}{w} $&$\times$&$\color{white}{w} $&&$\color{white}{w} $&$\times$&$\color{white}{w} $\\
 \hline
 b&&&&&&$\times$&&$\times$&\\
 \hline
 c&&&&&&$\times$&&&\\
 \hline

\end{tabular}

  \begin{tabular}{|c||c|c|c|c|c|c|c|c|c|}

\hline
Case 2
& \multicolumn{2}{r}{$\alpha$} &

& \multicolumn{2}{r}{$\beta$}&&

\multicolumn{2}{r}{$\gamma$}&\\
\hline

\tiny{\backslashbox{Objects}{Attributes}}& 1 & 2 & 3 & 1 & 2 & 3 & 1 & 2 & 3    \\
 \hline
\hline
 a&$\times$&$\color{white}{w} $&$\color{white}{w} $&$\times$&$\times$&$\color{white}{w} $&$\color{white}{w} $&$\color{white}{w} $&\\
 \hline
 b&&&&&$\times$&&&&$\times$\\
 \hline
 c&&&&&&&&&$\times$\\
 \hline
\end{tabular}
\caption{Corresponding triadic context for the previous examples.}

\end{table}

% of $\mathbb{K}_{1}$ and $\mathbb{K}_{2}$ since we use $\mathbb{K}_{1}$ and $\mathbb{K}_{2}$ to construct the triadic context $\mathbb{K}$? The answer is likely to be ``yes''. More over, the stem base of $\mathbb{K}_{2}$ is not even needed. Before we  clarify the so-called ``stem base'' for a triadic context $\mathbb{K}$, the definition for $conditional$ $context$ is introduced.

However, given the stem base, the intent set of $\mathbb{K}_{1}$ and the set of conditions, we can construct a part of the stem base with respect to the triadic implications for $\mathbb{K}$. Let's have insight of the way we construct such ``partial'' stem base for $\mathbb{K}$.% The definition for $conditional$ $context$ is introduced.Before we clarify the so-called ``stem base'' for a triadic context $\mathbb{K}$,

For a triadic context $\mathbb{K}$, triadic implications are defined as implications under certain conditions(meta-attributes) $C\subseteq B$. In another word, the triadic implications under $C$ are the implications of conditional context relative to $C$. In the case that $\mathbb{K}:=(G, M, B, Y)$ is constructed form $\mathbb{K}_{1}:=(G, M, I)$ and $\mathbb{K}_{2}:=(M, B, J)$, it is clear that each conditional context is constructed from $\mathbb{K}_{1}$ by removing the occurrences of the attributes that don't have all the meta-attributes in $C$. The conditional context with respect to Definition 9 is defined as following.

\begin{definition}Given formal contexts $\mathbb{K}_{1}:=(G, M, I)$, $\mathbb{K}_{2}:=(M, B, J)$ and their corresponding triadic context $\mathbb{K}:=(G, M, B, Y)$, $\mathbb{K}_{\rm{C}}:=(G, M,I/I_{\bar{\text{C}}})$ is the conditional context of $\mathbb{K}$ relative to conditions $C$ iff $I_{\bar{\text{C}}}:=\{(g, m)\in I $ $|$ for $g\in G$ and $m\in M/C^{J}\}$.\end{definition}

%Curious reader may ask for the reason why we use $C^{J}$ in Definition 17. As We've already mentioned, each conditional context is constructed from $\mathbb{K}_{1}$ by removing the occurrences of the attributes that don't have all the meta-attributes in $C$. The attributes who hold for no object are surely reducible. For simplicity, we just don't concern them any more. Now we only need to consider every conditional context of $\mathbb{K}$ to find the relevant stem base.% The set of all the ``conditional stem bases'' is the so-called stem base of $\mathbb{K}$.% The way to compute all the conditional stem bases is specified in Lemma 5 and proved.

\begin{lemma}Given a formal context $\mathbb{C}:=(G, M, I)$, its intent set $INT$, its extent set $EXT$ and its stem base $\mathcal{L}$. After deleting the column of a random attribute $m$ from $M$, we get a new formal context $\mathbb{C}_{*}:=(G, M/m, I_{*})$, where $I_{*}:=I/I_{m}$ and $I_{m}:=\{(g, m)\in I$ $|$ $g\in G\}$. Its intent set, extent set and stem base denoted $INT_{*}$, $EXT_{*}$ and $\mathcal{L_{*}}$ respectively. %$\mathcal{L_{*}}$ can be constructed by using the following steps.%Then $INT_{*}$ is equal to $INT/m$ and $EXT_{*}\subseteq EXT$. $INT/m$ stands for the set constructed from $INT$ by removing all the occurrences of $m$ in each element of $INT$. Given $\mathbb{K}:=\{G, M, I\}$ and, the stem base $\mathcal{L_{\mathbb{K}_{C}}}$ of the conditional context $\mathbb{K}_{C}:=\{G, M, I_{C}\}$ can be found in the following steps.

If $A\rightarrow B$ is an implication in $\mathcal{L}$, the following statements hold.

1. If $m\notin B$, $A\rightarrow B$ is in $\mathcal{L_{*}}$.

2. If $m\in B$, $m\notin A$ and $A\neq B/m$, $A\rightarrow B/m$ is in $\mathcal{L_{*}}$.

3. If $m\in A$, find all the implications $A_{1}\rightarrow B_{1}$,$A_{2}\rightarrow B_{2}$,..., $A_{n}\rightarrow B_{n}$ in $\mathcal{L}$, where $m\notin A_{i}$, $m\in B_{i}$ and $B_{i}\subseteq B$ for $1\leq i\leq n$. Every $(A\cup A_{i})/m\rightarrow B/m$ that satisfies $(A\cup A_{i})/m\notin INT_{*}$ is
an implication of $\mathbb{C}_{*}:=(G, M/m, I_{*})$.\end{lemma}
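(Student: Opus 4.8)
The plan is to reduce everything to the two closure operators and the recursive description of pseudo-intents in Definition 12. First I would record, straight from Lemma 1, the basic dictionary: $INT_{*}=INT/m$, and for every $Y\subseteq M/m$ the $\mathbb{C}_{*}$-closure is $Y^{\diamond\diamond}=Y''/m$; in particular the two closures agree except possibly on $m$, and for $R,S\subseteq M/m$ the implication $R\rightarrow S$ holds in $\mathbb{C}_{*}$ iff it holds in $\mathbb{C}$ (both amount to $S\subseteq R''$, via ``$X\rightarrow Y$ holds iff $Y\subseteq X''$''). Since $A\rightarrow B$ lies in the stem base $\mathcal{L}$, Definition 12 and Theorem 1 give that $A$ is pseudo-closed in $\mathbb{C}$ and $B=A''$. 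For cases 1 and 2 it then suffices to show $A$ stays pseudo-closed in $\mathbb{C}_{*}$ with the correct closure; for case 3 I only need validity of the stated implication in $\mathbb{C}_{*}$.

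For case 1, $m\notin B=A''$, so every $Q\subseteq A$ has $Q''\subseteq A''=B$, hence $m\notin Q''$ and $Q^{\diamond\diamond}=Q''$: the two closure operators literally coincide on the whole downset of $A$. By induction on $|Q|$ through Definition 12 the pseudo-closed subsets of $A$ are therefore the same in both contexts, so $A$ is $\mathbb{C}_{*}$-pseudo-closed with $A^{\diamond\diamond}=B$, i.e. $A\rightarrow B\in\mathcal{L}_{*}$. For case 2 the closure of $A$ drops $m$, giving $A^{\diamond\diamond}=B/m$, and the hypothesis $A\neq B/m$ guarantees $A$ is still non-closed in $\mathbb{C}_{*}$. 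I would then check Definition 12 for $A$ in $\mathbb{C}_{*}$ by taking an arbitrary $\mathbb{C}_{*}$-pseudo-closed $Q\subsetneq A$ and proving $Q^{\diamond\diamond}\subseteq A$, splitting on whether $m\in Q''$. When $m\notin Q''$ the downset argument of case 1 applies verbatim to $Q$, so $Q$ is $\mathbb{C}$-pseudo-closed and Definition 12 for $A$ in $\mathbb{C}$ yields $Q''\subseteq A$, whence $Q^{\diamond\diamond}=Q''\subseteq A$.

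For case 3, write $C:=(A\cup A_{i})/m$. Because $B_{i}\subseteq B$ we have $A_{i}\subseteq A_{i}''=B_{i}\subseteq B$ and $A\subseteq A''=B$, so $A\cup A_{i}\subseteq B$ and thus $(A\cup A_{i})''\subseteq B$. Since $m\in B_{i}\subseteq A_{i}''$ and $A_{i}\subseteq C$, we get $m\in C''$, and together with $A/m\subseteq C\subseteq C''$ this gives $A\subseteq C''$; hence $C''=(A\cup A_{i})''=B$. Translating to $\mathbb{C}_{*}$, $C^{\diamond\diamond}=C''/m=B/m$, so $B/m\subseteq C^{\diamond\diamond}$, i.e. $C\rightarrow B/m$ holds in $\mathbb{C}_{*}$; the side condition $C\notin INT_{*}$ merely records that $C$ is non-closed, so the implication is non-trivial. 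This is exactly the semantic content claimed, and no pseudo-closedness of $C$ is asserted.

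The main obstacle is the remaining sub-case of case 2, where $Q\subsetneq A$ is $\mathbb{C}_{*}$-pseudo-closed but $m\in Q''$; here the $\mathbb{C}$- and $\mathbb{C}_{*}$-closures of $Q$ genuinely differ, $Q$ need not be $\mathbb{C}$-pseudo-closed, and the clean transfer of case 1 breaks. The goal is still $Q^{\diamond\diamond}=Q''/m\subseteq A$, equivalently $Q''\subseteq A\cup\{m\}$. I would attack this by well-founded induction on $|A|$: if $Q$ fails $\mathbb{C}$-pseudo-closedness then by Definition 12 there is a $\mathbb{C}$-pseudo-closed $P\subsetneq Q$ with $P''\not\subseteq Q$, and when $P$ is itself $\mathbb{C}_{*}$-pseudo-closed the $\mathbb{C}_{*}$-pseudo-closedness of $Q$ forces $P''/m\subseteq Q$, so the only attribute of $P''$ escaping $Q$ is $m$; iterating this observation down the pseudo-closed sets below $Q$ should confine every element of $Q''$ other than $m$ to $A$, using $A$'s $\mathbb{C}$-pseudo-closedness and $Q''\subseteq A''=B$. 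Making this descent rigorous — controlling precisely which sets acquire $m$ in their $\mathbb{C}$-closure without being pseudo-closed in $\mathbb{C}_{*}$ — is the delicate point, and is where I expect the real work of the proof to lie.
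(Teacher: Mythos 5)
Your cases 1 and 3 are correct, and in fact more careful than the paper's own treatment: for case 3 the paper runs essentially your computation $((A\cup A_{i})/m)''=B$, and for case 1 it merely asserts that the occurrence of $m$ is irrelevant, whereas you justify the transfer of pseudo-closedness by induction through Definition 12. The genuine gap is the one you name yourself: in case 2 you must show that every $\mathbb{C}_{*}$-pseudo-closed $Q\subsetneq A$ satisfies $Q^{\diamond\diamond}\subseteq A$, and you leave open the sub-case $m\in Q''$. You should know that the paper does not close this gap either --- its entire argument for case 2 is the sentence ``the occurrence of $m$ does not influence the pseudo closed set $A$ at all,'' i.e.\ it asserts without proof exactly the claim you got stuck on. So you have correctly isolated the one nontrivial point of the lemma, but your proposal, as written, does not settle it, and your suggested descent (peeling off $\mathbb{C}$-pseudo-closed sets below $Q$) is not the argument that closes it.

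The sub-case that blocks you is in fact vacuous, and the way to see this is to strengthen the induction so that it uses the pseudo-closedness of $A$ itself --- an ingredient your one-directional transfer never invokes. Prove, by induction on $|Q|$, for all $Q\subsetneq A$ simultaneously: $Q$ is $\mathbb{C}$-pseudo-closed iff $Q$ is $\mathbb{C}_{*}$-pseudo-closed, and in either case $Q''\subseteq A$ and $Q^{\diamond\diamond}=Q''$. (i) If $Q$ is $\mathbb{C}$-pseudo-closed, then Definition 12 applied to the pseudo-closed set $A$ gives $Q''\subseteq A$, hence $m\notin Q''$, so $Q^{\diamond\diamond}=Q''/m=Q''\neq Q$; any $\mathbb{C}_{*}$-pseudo-closed $P\subsetneq Q$ is by the induction hypothesis $\mathbb{C}$-pseudo-closed with $P^{\diamond\diamond}=P''\subseteq Q$, so $Q$ is $\mathbb{C}_{*}$-pseudo-closed. (ii) If $Q$ is $\mathbb{C}_{*}$-pseudo-closed, then $Q\neq Q''$ (a $\mathbb{C}$-closed $Q$ with $m\notin Q$ would be $\mathbb{C}_{*}$-closed); any $\mathbb{C}$-pseudo-closed $P\subsetneq Q$ is by the induction hypothesis $\mathbb{C}_{*}$-pseudo-closed with $P''=P^{\diamond\diamond}\subseteq Q$, so $Q$ is $\mathbb{C}$-pseudo-closed, and then $Q''\subseteq A$ exactly as in (i). This equivalence shows that a $\mathbb{C}_{*}$-pseudo-closed $Q\subsetneq A$ with $m\in Q''$ simply cannot exist: pseudo-closedness of $A$ in $\mathbb{C}$ forces $Q''\subseteq A\not\ni m$. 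With it, $A$ is $\mathbb{C}_{*}$-pseudo-closed (it is non-closed in $\mathbb{C}_{*}$ precisely because $A\neq B/m$) with $A^{\diamond\diamond}=B/m$, so $A\rightarrow B/m\in\mathcal{L}_{*}$, completing case 2.
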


\begin{proof} We now prove each statement.

1. If $m\notin B$, the occurrence of $m$ does not influence the pseudo closed set $A$ and closed set $B$ at all, $A\rightarrow B$ is in $\mathcal{L_{*}}$. %To be more specified, according to the definition of %$A\rightarrow B$ is an implication in $\mathbb{C}_{*}$ since it is easily seen that $(A, A')\in I_{*}$ and $(A', B)\in I_{*}$. On the other hand, for a pseudo intent $P$ of $\mathbb{C}$, if $P\subsetneq A$, then $(P, P')\in I_{*}$ and $(P', P'')\in I_{*}$stays not changed, which means that the relative implication $P\rightarrow P''$ still hold, then follows that $P''\subseteq A$ still holds.% $P_{1}$A\rightarrow B$ is in $\mathcal{L_{*}}$.

2. If $m\in B$, $m\notin A$ and $A\neq B/m$, the occurrence of $m$ does not influence the pseudo closed set $A$ at all. On the other hand, $A''=B$ for $\mathbb{C}$ and $m\in B$ implies $A''=B/m$ for $\mathbb{C}_{*}$, as $A\neq B/m$. Then follows that $A\rightarrow B/m$ is in $\mathcal{L_{*}}$.% $A$ still satisfies the definition of pseudo intent. $A\rightarrow B/m$ is in $\mathcal{L_{*}}$. To be more specified, for one thing, $A\rightarrow B/m$ is an implication of $\mathbb{C}$, then $A\rightarrow B/m$ still holds in $\mathbb{C}$; for another, every pseudo intent $P\subsetneq A$ stays not changed, which means that the relative implication $P\rightarrow P''$ still hold, then follows that $P''\subseteq A$ still holds.

3. If $m\in A$, find all the implications $A_{1}\rightarrow B_{1}$,$A_{2}\rightarrow B_{2}$,..., $A_{n}\rightarrow B_{n}$ in $\mathcal{L}$, where $m\notin A_{i}$, $m\in B_{i}$ and $B_{i}\subseteq B$ for $1\leq i\leq n$. For $\mathbb{C}$, $B_{i}\subseteq B$ implies $A\cup B_{i}\rightarrow B$. $A_{i}\rightarrow B_{i}$ implies $A\cup A_{i}\rightarrow B$. Moreover, $m\notin A_{i}$ implies $(A\cup A_{i})/m\rightarrow B$. In another word, $((A\cup A_{i})/m)''= B$ holds for $\mathbb{C}$. On the other hand, $m\in B$ implies that $((A\cup A_{i})/m)''= B/m$ holds for $\mathbb{C}_{*}$. While $(A\cup A_{i})/m\notin INT_{*}$, $(A\cup A_{i})/m\rightarrow B/m$ is an implication of $\mathbb{C}_{*}$.\qed %As the occurrence of $m$ does not influence $(A\cup A_{i}/m)''= A\cup B/m$ at all while $A\cup A_{i}/m\notin INT_{*}$, $A\cup A_{i}/m\rightarrow A\cup B/m$ is an implication of $\mathbb{C}_{*}$.\qed%Every $A\cup A_{i}/m\rightarrow B$ that satisfies $A\notin INT_{*}$ is an implication of $\mathbb{C}$ since . %$A\cup A_{i}/m$ satisfies the first condition of the definition of pseudo intent.

%Now we prove the second condition of pseudo intent for $A\cup A_{i}/m$. If $A\cup A_{i}/m\subsetneq A$, since $A$ is a pseudo intent of $\mathbb{C}$,
\end{proof}

%\begin{definition}$\mathcal{L_\mathbb{K}}$ is the stem base of $\mathbb{K}:=\{G, M, B, Y\}$ iff $\mathcal{L_\mathbb{K}}$ relative to conditions $C$ if $\mathbb{K}_{C}$ satisfies the following property:

%For $X_{1}\in G$ and $X_{2}\in M$, $(X_{1}, X_{2}, C)\in Y$ iff $(X_{1}, X_{2})\in I_{C}$.\end{definition}

Readers may ask if $(A\cup A_{i})/m\rightarrow B/m$ is in $\mathcal{L_{*}}$ as well. A counter example is given in Table 7 and Table 8. Although $\{1,$ $3\}\rightarrow \{1,$ $3,$ $4,$ $5,$ $6\}$ in $\mathcal{L}_{*}$ follows the third statement in Lemma 8, $\{1,$ $3,$ $4\}\rightarrow \{1,$ $3,$ $4,$ $5,$ $6\}$, which can be constructed from $\{1,$ $2,$ $3,$ $4 \}\rightarrow \{1,$ $2,$ $3,$ $4,$ $5,$ $6\}$ in $\mathcal{L}$, is only an implication of $\mathbb{C}_{*}$ and is not included in $\mathcal{L}_{*}$. On the other hand, applying the first two statements of Lemma 8 on $\mathcal{L}$ is not sufficient to compute the whole $\mathcal{L}_{*}$. As the previous example shows, $\{1,$ $3\}\rightarrow \{1,$ $3,$ $4,$ $5,$ $6\}\in \mathcal{L}_{*}$ can not be constructed by using the first two statements of Lemma 8 on $\mathcal{L}$.

\begin{table}[!htb]

\begin{center}
\begin{cxt}%
\cxtName{$\mathbb{C}$}%\tiny{\backslashbox{Objects}{Attributes}}}%
\att{1}%
\att{2}%
\att{3}%
\att{4}%
\att{5}%
\att{6}%
\obj{xx.x..}{a}
\obj{.xx...}{b}
\obj{xxxxxx}{c}
\obj{....xx}{d}
\obj{.....x}{e}
\obj{x.....}{f}
\end{cxt}
%%%%%%%%%%%
\begin{cxt}%
\cxtName{$\mathbb{C}_{*}$}%\tiny{\backslashbox{Objects}{Attributes}}}%
\att{1}%
%\att{2}%
\att{3}%
\att{4}%
\att{5}%
\att{6}%
\obj{x.x..}{a}
\obj{.x...}{b}
\obj{xxxxx}{c}
\obj{...xx}{d}
\obj{....x}{e}
\obj{x....}{f}
\end{cxt}
\caption{$\mathbb{C}_{*}$ is constructed from $\mathbb{C}$ by removing the attribute 2.}
\end{center}
\end{table}
%%%%%%%%%%%
\begin{table}[!htb]
\begin{center}
\begin{tabular}{|c||c|}
\hline
$\mathcal{L}$&$INT$\\
\hline
\hline

$\{5\}\rightarrow \{5,$ $6\}$& $\{6\}$   \\
%\hline
$\{4\}\rightarrow \{1,$ $2,$ $4\}$&$\{5,$ $6\}$  \\
%\hline
$\{3\}\rightarrow \{2,$ $3\}$& $\{2\}$\\
%\hline
$\{2,$ $6\}\rightarrow \{1,$ $2,$ $3,$ $4,$ $5,$ $6\}$&$\{2,$ $3\}$ \\
%\hline
$\{1,$ $6\}\rightarrow \{1,$ $2,$ $3,$ $4,$ $5,$ $6\}$&$\{1\}$ \\
%\hline
$\{1,$ $2\}\rightarrow \{1,$ $2,$ $4\}$&$\{1,$ $2,$ $4\}$ \\
% \hline
$\{1,$ $2,$ $3,$ $4 \}\rightarrow \{1,$ $2,$ $3,$ $4,$ $5,$ $6\}$&$\{1,$ $2,$ $3,$ $4,$ $5,$ $6\}$    \\
 % \hline

 \hline
 \end{tabular}

\begin{tabular}{|c||c|}
\hline
$\mathcal{L}_{*}$&$INT_{*}$\\
\hline
\hline

$\{5\}\rightarrow \{5,$ $6\}$& $\{6\}$   \\
%\hline
$\{4\}\rightarrow \{1,$ $4\}$&$\{5,$ $6\}$  \\
%\hline
$\{3,$ $6\}\rightarrow \{1,$ $3,$ $4,$ $5,$ $6\}$& $\{3\}$\\
%\hline
$\{1,$ $6\}\rightarrow \{1,$ $3,$ $4,$ $5,$ $6\}$&$\{1\}$ \\
%\hline
%$\{2,$ $6\}\rightarrow \{1,$ $2,$ $3,$ $4,$ $5,$ $6\}$& \\
%\hline
$\{1,$ $3\}\rightarrow \{1,$ $3,$ $4,$ $5,$ $6\}$&$\{1,$ $4\}$ \\
% \hline
&$\{1,$ $3,$ $4,$ $5,$ $6\}$    \\
 % \hline
%  $\{1,$ $2,$ $3,$ $4 \}\rightarrow \{1,$ $2,$ $3,$ $4,$ $5,$ $6\}$&    \\
 \hline
 \end{tabular}

\end{center}
\caption{The stem bases and intent sets for $\mathbb{C}$ and $\mathbb{C}_{*}$.}

\end{table}

Lemma 8 specifies the case that one attribute is removed for a formal context, it can be generalized to the case that some attributes $M_{\rm{remove}}\subset M$ are removed for a formal context. As we focus on constructing the stem base, the third statement of Lemma 8 is not concerned any more.

\begin{lemma}Given a formal context $\mathbb{C}:=(G, M, I)$, its intent set $INT$, its extent set $EXT$ and its stem base $\mathcal{L}$. After deleting the columns of random attributes $M_{\rm{remove}}$ from $M$, we get a new formal context $\mathbb{C}_{*}:=(G, M/M_{\rm{remove}}, I_{*})$, where $I_{*}:=I/I_{M_{\rm{remove}}}$ and $I_{M_{\rm{remove}}}:=\{(g, M_{\rm{remove}})\in I$ $|$ $g\in G\}$. Its intent set, extent set and stem base denoted $INT_{*}$, $EXT_{*}$ and $\mathcal{L_{*}}$ respectively. %$\mathcal{L_{*}}$ can be constructed by using the following steps.%Then $INT_{*}$ is equal to $INT/m$ and $EXT_{*}\subseteq EXT$. $INT/m$ stands for the set constructed from $INT$ by removing all the occurrences of $m$ in each element of $INT$. Given $\mathbb{K}:=\{G, M, I\}$ and, the stem base $\mathcal{L_{\mathbb{K}_{C}}}$ of the conditional context $\mathbb{K}_{C}:=\{G, M, I_{C}\}$ can be found in the following steps.

If $A\rightarrow B$ is an implication in $\mathcal{L}$, the following statements hold.

1. If $M_{\rm{remove}}\cap B = \emptyset$, $A\rightarrow B$ is in $\mathcal{L_{*}}$.

2. If $M_{\rm{remove}}\cap B \neq \emptyset$, $M_{\rm{remove}}\cap A=\emptyset$ and $A\neq B/M_{\rm{remove}}$, $A\rightarrow B/M_{\rm{remove}}$ is in $\mathcal{L_{*}}$.
\end{lemma}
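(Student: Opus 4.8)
The plan is to derive Lemma 9 from Lemma 8 by removing the attributes of $M_{\rm{remove}}$ one at a time and tracking how the single implication $A\rightarrow B$ propagates through the successive stem bases. I would write $M_{\rm{remove}}=\{m_{1},\ldots,m_{k}\}$ and form the chain of contexts $\mathbb{C}=\mathbb{C}_{0},\mathbb{C}_{1},\ldots,\mathbb{C}_{k}=\mathbb{C}_{*}$, where $\mathbb{C}_{i}$ arises from $\mathbb{C}_{i-1}$ by deleting the column $m_{i}$, with stem base $\mathcal{L}_{i}$. Since the final context $\mathbb{C}_{*}$ does not depend on the deletion order, I am free to fix a convenient order later. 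The recursion is well founded because the conclusion of each application of Lemma 8 places the transformed implication into the stem base $\mathcal{L}_{i}$ of the next context, which is exactly the input required for the following step. The crucial simplification is that $M_{\rm{remove}}\cap A=\emptyset$ in both statements, so no removed attribute ever lies in the premise $A$; hence the third case of Lemma 8 (the case $m\in A$) never arises, and at each step only cases 1 and 2 of Lemma 8 are invoked.

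For statement 1, I would suppose $M_{\rm{remove}}\cap B=\emptyset$. Since $A\subseteq B$ (because $B=A''$ and $A\subseteq A''$), no $m_{i}$ lies in $A$ or in $B$, so case 1 of Lemma 8 applies at every one of the $k$ steps and leaves the implication unchanged. Thus $A\rightarrow B$ survives into $\mathcal{L}_{1},\mathcal{L}_{2},\ldots$ and finally into $\mathcal{L}_{*}$.

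For statement 2, I would split $M_{\rm{remove}}$ into $M_{B}:=M_{\rm{remove}}\cap B$ (nonempty, and disjoint from $A$) and $M_{\bar B}:=M_{\rm{remove}}/B$, and order the deletions so that the elements of $M_{\bar B}$ are removed first. Each such deletion meets case 1 of Lemma 8, since the removed attribute is not in the current intent, and leaves $A\rightarrow B$ untouched. I would then remove the elements of $M_{B}=\{n_{1},\ldots,n_{j}\}$ one by one; at the $i$-th such step the current implication is $A\rightarrow B/\{n_{1},\ldots,n_{i-1}\}$, the attribute $n_{i}$ lies in the current intent but not in $A$, so case 2 of Lemma 8 applies and yields $A\rightarrow B/\{n_{1},\ldots,n_{i}\}$, provided its side condition $A\neq (B/\{n_{1},\ldots,n_{i-1}\})/n_{i}=B/\{n_{1},\ldots,n_{i}\}$ holds. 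After all $j$ steps the implication becomes $A\rightarrow B/M_{B}=A\rightarrow B/M_{\rm{remove}}$, as required.

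The main obstacle is verifying that the side condition of case 2 of Lemma 8 holds at \emph{every} intermediate step, whereas the hypothesis supplies only $A\neq B/M_{\rm{remove}}$ for the complete removal. The resolution is the observation that the forbidden equality $A=B/\{n_{1},\ldots,n_{i}\}$ would force $B=A\cup\{n_{1},\ldots,n_{i}\}$; but $n_{i+1},\ldots,n_{j}\in B$ are distinct from $n_{1},\ldots,n_{i}$ and are not in $A$, so this is impossible whenever $i<j$. Hence the bad case can occur only at the last removal ($i=j$), where the required inequality is precisely $A\neq B/M_{\rm{remove}}$ and is guaranteed by hypothesis. This closes the recursion and establishes both statements, in keeping with the paper's remark that the generalization is routine once Lemma 8 is available.
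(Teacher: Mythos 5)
Your proof is correct, but it takes a genuinely different route from the paper's. The paper does not iterate Lemma 8 at all: it proves Lemma 9 directly, by repeating the argument of Lemma 8 with the single attribute $m$ replaced wholesale by the set $M_{\mathrm{remove}}$ --- asserting that the occurrence of $M_{\mathrm{remove}}$ does not affect the pseudo-closed set $A$, and that $A''=B$ in $\mathbb{C}$ becomes $A''=B/M_{\mathrm{remove}}$ in $\mathbb{C}_{*}$ (using $A\neq B/M_{\mathrm{remove}}$), whence $A\rightarrow B/M_{\mathrm{remove}}\in\mathcal{L}_{*}$. Your one-attribute-at-a-time induction is instead the strategy the paper reserves for Lemmas 3 and 4 (``apply Lemma 1/2 recursively''), and carrying it out for stem bases is more delicate than for intents/extents; your proposal supplies exactly the two ingredients that make it sound: ordering the deletions so that the attributes outside $B$ go first (so only cases 1 and 2 of Lemma 8 ever fire, case 3 being excluded by $M_{\mathrm{remove}}\cap A=\emptyset$ together with $A\subseteq A''=B$), and the observation that the intermediate side condition $A\neq B/\{n_{1},\ldots,n_{i}\}$ can only fail at the very last deletion, where it is precisely the hypothesis $A\neq B/M_{\mathrm{remove}}$. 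What your route buys is that Lemma 9 becomes a formal corollary of Lemma 8 used as a black box, with no need to re-examine pseudo-closedness in the multi-attribute setting; what the paper's route buys is brevity, at the cost of re-asserting (rather than deriving) that pseudo-closedness of $A$ and the closure computation survive the simultaneous removal.
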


\begin{proof} The proof is similar to the proof of Lemma 8.

1. If $M_{\rm{remove}}\cap B = \emptyset$, the occurrence of $M_{\rm{remove}}$ does not influence the pseudo closed set $A$ and closed set $B$ at all, $A\rightarrow B$ is in $\mathcal{L_{*}}$.

2. If $M_{\rm{remove}}\cap B \neq \emptyset$, $M_{\rm{remove}}\cap A=\emptyset$ and $A\neq B/M_{\rm{remove}}$, the occurrence of $M_{\rm{remove}}$ does not influence the pseudo closed set $A$ at all. On the other hand, $A''=B$ for $\mathbb{C}$ and $M_{\rm{remove}}\cap B \neq \emptyset$ implies $A''=B/M_{\rm{remove}}$ for $\mathbb{C}_{*}$, as $A\neq B/M_{\rm{remove}}$. Then follows that $A\rightarrow B/M_{\rm{remove}}$ is in $\mathcal{L_{*}}$.\qed

\end{proof}

Algorithm 3 shows that how we can construct part of $\mathcal{L}_{*}$, denoted $\mathcal{L}_{*\rm{partial}}$, from $\mathcal{L}$ by using Lemma 9. This algorithm makes a contribution to simplifying the computation of $\mathcal{L}_{*}$.% It is convenient to compute the partial pseudo intent set, denoted $Pseudo\_INT$, as well, the reason is explained in the next section.

\begin{algorithm}
\caption{Generating $\mathcal{L}_{*\rm{partial}}$ from $\mathcal{L}$}%  and partial $Pseudo\_INT$
 Input: $\mathbb{C}:=(G,M,I), M_\text{remove}$ and $\mathcal{L}$.

 Output: $\mathcal{L}_{*\rm{partial}}$ and $\mathbb{C}_{*}$.%, partial $Pseudo\_INT$.

 For every implication $A\rightarrow B\in \mathcal{L}$, do the following:
  \begin{algorithmic}[1]

  \If {$M_{\rm{remove}}\cap B = \emptyset$}
  \State $\mathcal{L}_{*\rm{partial}}\leftarrow \mathcal{L}_{*\rm{partial}}\cup A\rightarrow B$%, $Pseudo\_INT\leftarrow Pseudo\_INT\cup A$
  \Else
  \If{$M_{\rm{remove}}\cap B \neq \emptyset$, $M_{\rm{remove}}\cap A=\emptyset$ and $A\neq B/m$}
  \State $\mathcal{L}_{*\rm{partial}}\leftarrow \mathcal{L}_{*\rm{partial}}\cup A\rightarrow B/M_{\rm{remove}}$%, $Pseudo\_INT\leftarrow Pseudo\_INT\cup A$
 % \Else \State $\mathfrak{B}_{*}\leftarrow  \mathfrak{B}_{*}\cup (ext, int/M_{remove})$
  \EndIf
   \EndIf
\State $\mathbb{C}_{*}\leftarrow (G, M/M_\text{remove}, I/I_{M_\text{remove}})$

  \end{algorithmic}
\end{algorithm}

For the variant that we remove the occurrences of $M_\text{remove}$ rather then delete the columns of $M_\text{remove}$, the attributes in $M_\text{remove}$ also makes a contribution to the stem base of $\mathbb{C}_{\star}$.

\begin{lemma}Given a formal context $\mathbb{C}:=(G, M, I)$, its intent set $INT$, its extent set $EXT$ and its stem base $\mathcal{L}$. After removing the occurrences of random attributes $M_{\rm{remove}}$ from $M$, we get a new formal context $\mathbb{C}_{\star}:=(G, M, I_{\star})$, where $I_{\star}:=I/I_{M_{\rm{remove}}}$ and $I_{M_{\rm{remove}}}:=\{(g, M_{\rm{remove}})\in I$ $|$ $g\in G\}$. Its intent set, extent set and stem base denoted $INT_{\star}$, $EXT_{\star}$ and $\mathcal{L_{\star}}$ respectively.

If $A\rightarrow B$ is an implication in $\mathcal{L}$, the following statements hold.

1. If $M_{\rm{remove}}\cap B = \emptyset$, $A\rightarrow B$ is in $\mathcal{L_{\star}}$.

2. If $M_{\rm{remove}}\cap B \neq \emptyset$, $M_{\rm{remove}}\cap A=\emptyset$ and $A\neq B/M_{\rm{remove}}$, $A\rightarrow B/M_{\rm{remove}}$ is in $\mathcal{L_{\star}}$.

3. For every $m\in M_{\rm{remove}}$, $\{m\}\rightarrow M$ is in $\mathcal{L_{\star}}$.
\end{lemma}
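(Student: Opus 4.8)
The plan is to treat statements 1 and 2 as consequences of Lemma 9, and to regard statement 3 as the only genuinely new content. The key observation is that $\mathbb{C}_{\star}$ differs from the column-deleted context $\mathbb{C}_{*}:=(G, M/M_{\rm{remove}}, I_{*})$ only in that the attributes of $M_{\rm{remove}}$ are re-adjoined as \emph{empty} columns, each holding for no object. So I would first record how adjoining empty columns acts on a stem base. Writing $\cdot^{\bullet}$ for the derivation in $\mathbb{C}_{\star}$, every $m\in M_{\rm{remove}}$ satisfies $\{m\}^{\bullet}=\emptyset$, hence $\{m\}^{\bullet\bullet}=\emptyset^{\bullet}=M$. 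Since a removed attribute can belong to a closure only when the corresponding object set is empty, the only intent that changes is the one with empty extent (if it exists), which grows to $M$; every closed set with non-empty extent, and every implication whose conclusion is not that empty-extent intent, survives unchanged. This reduces the whole lemma to bookkeeping on top of Lemma 9.

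For statement 1, from $A\subseteq B=A''$ and $M_{\rm{remove}}\cap B=\emptyset$ both premise and conclusion avoid $M_{\rm{remove}}$, so $A^{\bullet}=A'\neq\emptyset$ and $A^{\bullet\bullet}=B$; Lemma 9(1) places $A\rightarrow B$ in $\mathcal{L}_{*}$, and because its conclusion is not the empty-extent intent the adjunction of empty columns leaves it in place, giving $A\rightarrow B\in\mathcal{L}_{\star}$. For statement 2, Lemma 9(2) first yields $A\rightarrow B/M_{\rm{remove}}\in\mathcal{L}_{*}$; the conclusion $B/M_{\rm{remove}}$ equals the empty-extent intent of $\mathbb{C}_{*}$ exactly when $A^{*}=\emptyset$, i.e.\ when $B=A''=M$, and away from that case re-adjoining the empty columns preserves the implication, so $A\rightarrow B/M_{\rm{remove}}\in\mathcal{L}_{\star}$. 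For statement 3 I would argue directly from $\{m\}^{\bullet\bullet}=M$: the implication $\{m\}\rightarrow M$ is valid in $\mathbb{C}_{\star}$, and it remains only to certify that $\{m\}$ is pseudo-closed, i.e.\ $\{m\}\neq M$ together with the pseudo-closedness condition on its sole proper subset $\emptyset$.

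I expect the \textbf{main obstacle} to be precisely this pseudo-closedness bookkeeping, because membership in the stem base, as opposed to mere validity of an implication, is a global property of the pseudo-intent order rather than a pointwise closure computation. The delicate point is the empty premise: if some attribute is common to \emph{every} object, then $\emptyset$ is itself a pseudo-intent with $\emptyset^{\bullet\bullet}\neq\emptyset$, and since no $m\in M_{\rm{remove}}$ lies in $\emptyset^{\bullet\bullet}$ the set $\{m\}$ fails the pseudo-closedness test. Statement 3 therefore really needs the hypothesis that no attribute is shared by all objects (equivalently, that $\emptyset$ is an intent of $\mathbb{C}_{\star}$), which I would state explicitly; the same empty-extent phenomenon is the borderline $B=M$ hiding inside statement 2, where the true stem-base conclusion is $M$ rather than $B/M_{\rm{remove}}$, and I would isolate that case separately. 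Away from these two degeneracies, the reduction to Lemma 9 together with the empty-column analysis completes the argument.
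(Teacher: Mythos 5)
Your skeleton is the same as the paper's: the paper also disposes of statements 1 and 2 by appealing to Lemma 9 (on the grounds that the removed attributes now relate to no object), and proves statement 3 by asserting that $\{m\}\rightarrow M$ holds and that ``$m$ satisfies Definition 14,'' i.e.\ that $\{m\}$ is pseudo-closed. What you add --- the explicit analysis of what re-adjoining empty columns does to closures and pseudo-intents, and the two degenerate cases you isolate --- is exactly what the paper's two-sentence proof omits, and your caveats are not cosmetic: both correspond to genuine counterexamples to the lemma as printed, so the extra hypotheses you propose are necessary, not merely cautious.

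To make this concrete. For statement 3, take $G=\{g\}$, $M=\{1,2\}$, $I=\{(g,1),(g,2)\}$, $M_{\rm{remove}}=\{2\}$. In $\mathbb{C}_{\star}$ the set $\emptyset$ is a pseudo-intent with closure $\{1\}\not\subseteq\{2\}$, so $\{2\}$ fails Definition 14; indeed $\mathcal{L}_{\star}=\{\emptyset\rightarrow\{1\}\}$ and $\{2\}\rightarrow M\notin\mathcal{L}_{\star}$, even though it is a valid implication. The paper's assertion that $m$ satisfies Definition 14 is false here, and your hypothesis (no attribute common to all objects of $\mathbb{C}_{\star}$, equivalently $\emptyset\in INT_{\star}$) is precisely what repairs it. For statement 2, your borderline case $A'=\emptyset$ (which forces $B=M$) also really occurs: with $G=\{g,h\}$, $M=\{1,2,3\}$, $I=\{(g,3)\}$, $M_{\rm{remove}}=\{3\}$, the implication $\{1\}\rightarrow\{1,2,3\}$ lies in $\mathcal{L}$ and meets all three hypotheses of statement 2, yet the closure of $\{1\}$ in $\mathbb{C}_{\star}$ is $M$ rather than $B/M_{\rm{remove}}$, so $\mathcal{L}_{\star}=\{\{1\}\rightarrow M,\ \{2\}\rightarrow M,\ \{3\}\rightarrow M\}$ contains $\{1\}\rightarrow M$ and not the claimed $\{1\}\rightarrow\{1,2\}$. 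So your plan is correct wherever the lemma is correct, and where you predict trouble the defect lies in the lemma and in the paper's own proof, not in your argument; I would only suggest that you state the two counterexamples explicitly and formulate the corrected lemma (with your additional hypothesis and with the case $A'=\emptyset$ excluded from statement 2, its conclusion there being $A\rightarrow M$) rather than leaving them as anticipated obstacles.
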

\begin{proof} Since $M_\text{remove}$ relate to no object, the first two statements holds directly according to Lemma 9. For every $m\in M_{\rm{remove}}$, the implication $\{m\}\rightarrow M$ holds and $m$ satisfies Definition 14, thus $\{m\}\rightarrow M$ is in $\mathcal{L_{\star}}$.
\qed\end{proof}

\begin{algorithm}
\caption{Generating $\mathcal{L}_{\star \rm{partial}}$ from $\mathcal{L}$}%  and partial $Pseudo\_INT$
 Input: $\mathbb{C}$, $M_\text{remove}:=\{m_{1}, m_{2},..., m_{n}\}$ and $\mathcal{L}$.

 Output: $\mathcal{L}_{\star \rm{partial}}$, $\mathbb{C}_{\star}$%, partial $Pseudo\_INT$.

 For every implication $A\rightarrow B\in \mathcal{L}$, do the following:
  \begin{algorithmic}[1]

  \If {$M_{\rm{remove}}\cap B = \emptyset$}
  \State $\mathcal{L}_{*\rm{partial}}\leftarrow \mathcal{L}_{*\rm{partial}}\cup A\rightarrow B$%, $Pseudo\_INT\leftarrow Pseudo\_INT\cup A$
  \Else
  \If{$M_{\rm{remove}}\cap B \neq \emptyset$, $M_{\rm{remove}}\cap A=\emptyset$ and $A\neq B/m$}
  \State $\mathcal{L}_{*\rm{partial}}\leftarrow \mathcal{L}_{*\rm{partial}}\cup A\rightarrow B/M_{\rm{remove}}$%, $Pseudo\_INT\leftarrow Pseudo\_INT\cup A$
 % \Else \State $\mathfrak{B}_{*}\leftarrow  \mathfrak{B}_{*}\cup (ext, int/M_{remove})$
  \EndIf
   \EndIf
   \State $i\leftarrow 0$
\Repeat
\State $i++$
\State $\mathcal{L}_{*\rm{partial}}\leftarrow \mathcal{L}_{*\rm{partial}}\cup (m_{i}\rightarrow M)$

\Until $i=n$
\State $\mathbb{C}_{\star}\leftarrow (G, M, I/I_{M_\text{remove}})$
  \end{algorithmic}
\end{algorithm}
%\subsubsection{Conditional Attribute Implications in Triadic Formal Contexts}

Algorithm 4 shows that how we can compute  $\mathcal{L}_{\star \rm{partial}}$. The procedure is similar to Algorithm 3. While maintaining the operations in Algorithm 3, from step 7 to step 10, Algorithm 4 specifies the implications that satisfies the third statement in Lemma 10.

In our case, the conditional contexts are constructed by removing the occurrences of the attributes who have no relevant meta-attributes. We can find out all the formal concepts for certain conditional context by using the above mentioned method. This makes a contribution to computing the triadic implications while using the Next Closure algorithm.
\subsection{Modified Next Closure Algorithm for Computing Triadic Implications}

We find out the way to construct all the new formal concepts after removing any attribute $m$|simply remove all the occurrences of $m$. On the other hand, part of the conditional stem bases can be constructed. These conclusions will help us modify the Next Closure algorithm to meet our need.
\subsubsection{Next Closure Algorithm}

The algorithms is developed in 1984 by Bernhard Ganter.\cite{ganter2012book} It is an algorithm used to find out the intents and stem base in the lectic order. It can be rephrased without reference to formal concept analysis since that it essentially relies on a single property of concept
lattices, that the set of concept intents is closed under intersections. The
technique can be formulated for closure systems.

\begin{definition} Let $M=\{1,...,n\}$. We say that $A\subseteq M$ is lectically smaller than $B\subseteq M$, if $B\neq A$ and the smallest element in which A and B differ belongs to B. Moreover, ``$<_{i}$'' and ``$+$'' are defined as shown below.\end{definition}

\begin{center}
$A<_{i}B$$\Leftrightarrow i\in B/A\wedge A\cap\{1,2,...,i-1\}=B\cap\{1,2,...,i-1\}$.

$A+i=(A\cap\{1,2,...,i-1\})\cup i.$
\end{center}
\begin{table}[!htb]
\begin{diagram}{0}{160}
\Node{0}{60}{10}
\Node{1}{40}{40}
\Node{2}{60}{40}
\Node{3}{100}{40}
\Node{4}{40}{70}
\Node{5}{80}{70}
\Node{6}{100}{70}
\Node{7}{80}{100}
\Edge{0}{1}
\Edge{0}{2}
\Edge{0}{3}
\Edge{4}{2}
\Edge{3}{5}
\Edge{3}{6}
\Edge{7}{6}
%\Numbers
\leftObjbox{0}{2}{2}{$\phi$}
\rightAttbox{1}{2}{2}{5}
\rightAttbox{2}{2}{2}{4}
\rightAttbox{3}{2}{2}{3}
\rightAttbox{4}{2}{2}{45}
\rightAttbox{5}{2}{2}{35}
\rightAttbox{6}{2}{2}{34}
\rightAttbox{7}{2}{2}{345}
\end{diagram}
\begin{diagram}{0}{150}
\Node{0}{136.5}{40}
\Node{8}{56.5}{10}
\Node{1}{116.50}{70}
\Node{2}{136.50}{70}
\Node{3}{176.5}{70}
\Node{4}{116.5}{100}
\Node{5}{156.5}{100}
\Node{6}{176.5}{100}
\Node{7}{156.5}{130}
\Edge{0}{1}
\Edge{0}{8}
\Edge{0}{2}
\Edge{0}{3}
\Edge{4}{2}
\Edge{3}{5}
\Edge{3}{6}
\Edge{7}{6}
\rightAttbox{0}{2}{-4}{2}
\rightAttbox{1}{2}{2}{25}
\rightAttbox{2}{2}{2}{24}
\rightAttbox{3}{2}{2}{23}
\rightAttbox{4}{2}{2}{245}
\rightAttbox{5}{2}{2}{235}
\rightAttbox{6}{2}{2}{234}
\rightAttbox{7}{2}{2}{2345}
\end{diagram}
\begin{diagram}{0}{150}
\Node{0}{213}{40}
\Node{8}{53}{10}
\Node{1}{193}{70}
\Node{2}{213}{70}
\Node{3}{253}{70}
\Node{4}{193}{100}
\Node{5}{233}{100}
\Node{6}{253}{100}
\Node{7}{233}{130}
\Edge{0}{1}
\Edge{0}{8}
\Edge{0}{2}
\Edge{0}{3}
\Edge{4}{2}
\Edge{3}{5}
\Edge{3}{6}
\Edge{7}{6}
\rightAttbox{0}{2}{-4}{1}
\rightAttbox{1}{2}{2}{15}
\rightAttbox{2}{2}{2}{14}
\rightAttbox{3}{2}{2}{13}
\rightAttbox{4}{2}{2}{145}
\rightAttbox{5}{2}{2}{135}
\rightAttbox{6}{2}{2}{134}
\rightAttbox{7}{2}{2}{1345}
\end{diagram}
\begin{diagram}{0}{150}
\Node{0}{289.5}{70}
\Node{8}{209.5}{40}
\Node{1}{269.5}{100}
\Node{2}{289.5}{100}
\Node{3}{329.5}{100}
\Node{4}{269.5}{130}
\Node{5}{309.5}{130}
\Node{6}{329.5}{130}
\Node{7}{309.5}{160}
\Edge{0}{1}
\Edge{0}{8}
\Edge{0}{2}
\Edge{0}{3}
\Edge{4}{2}
\Edge{3}{5}
\Edge{3}{6}
\Edge{7}{6}
\rightAttbox{0}{2}{-4}{12}
\rightAttbox{1}{2}{2}{125}
\rightAttbox{2}{2}{2}{124}
\rightAttbox{3}{2}{2}{123}
\rightAttbox{4}{2}{2}{1245}
\rightAttbox{5}{2}{2}{1235}
\rightAttbox{6}{2}{2}{1234}
\rightAttbox{7}{2}{2}{12345}
\end{diagram}
\begin{center}
  Fig 3: The set $\{1, 2, 3, 4, 5\}$ represented by lectic ordered tree.
\end{center}
\end{table}
Figure 3 shows a tree follows the lectic order. Each node uniquely stands for a subset of $S=\{1, 2, 3, 4, 5\}$ and the union of the subsets is $2^{|S|}$. Such a tree holds the following properties:

1. Each node is lectically smaller than its child nodes.

2. For two node in the same depth $a$ and $b$, if $a$ is on the left of $b$, $a$ is lectically smaller than $b$.

Algorithm 3 shows the original Next Closure algorithm to compute all concept intents and the stem base|it finds out all the intents and pseudo intents in lectic order. Interested readers can find more details in \cite{ganter2012book,ganter2002formal}.

%\begin{algorithm}
%\caption{ Next Closure}
%{The set $\mathcal{L}$ of all implications is initialized to $\mathcal{L }= \emptyset$.}

%{The lectically first concept intent or pseudo-intent is $\emptyset$.}
 % \begin{algorithmic}[1]
  %\If{$A$ is an intent or a pseudo-intent}
  %\State check all $i \in M/A$ in descending order, until $A<_{i} \mathcal{L}(A+i)$ holds, then $\mathcal{L}(A+i)$ is the next intent  or pseudo-intent
%\EndIf
 % \If{$\mathcal{L}(A+i)= (\mathcal{L}(A+i))''$}
  %\State $\mathcal{L}(A+i)$ is a concept intent, otherwise it is a pseudo-intent and the implication  $\mathcal{L}(A+i)\rightarrow (\mathcal{L}(A+i))''$ is added to $\mathcal{L}$

   %\EndIf

%\If{$\mathcal{L}(A+i) =M$}
 % \State finish
%\Else
%\State set $A\leftarrow\mathcal{L}(A+i)$ and continue with Step 1
 %  \EndIf
  %\end{algorithmic}
%\end{algorithm}
\begin{algorithm}[!htb]
\caption{ Next Closure}

Input: $\mathbb{C}$.

Output: $INT$, $\mathcal{L}$.

%{The set $\mathcal{L}$ of implications is initialized as empty.}%to $\mathcal{L }= \emptyset$.}

%{The set $INT$ of intents is initialized as empty.}%to $INT= \emptyset$.}

{$A$ is a subset of $M$.}
  \begin{algorithmic}[1]
  \State $A \leftarrow\emptyset$, $\mathcal{L} \leftarrow\emptyset$, $INT \leftarrow\emptyset$
 % \If{$A$ is an intent or a pseudo-intent}
  \State $Temp\leftarrow M/A$, $ i\leftarrow max(Temp)$
  \Repeat
  \If{$A<_{i} \mathcal{L}(A+i)$}
  \State{$A\leftarrow \mathcal{L}(A+i)$}

  \If{$A= A''$}
  \State $INT\leftarrow INT\cup A$
  \Else
  \State{$\mathcal{L}\leftarrow \mathcal{L}\cup (A\rightarrow A'')$ }
   \EndIf

%\If{$A =M$}
%  \State finish
%\Else

%\State goto step 2
\State $Temp\leftarrow M/A$, $ i\leftarrow max(Temp)$
%   \EndIf
   \Else
\State $Temp\leftarrow Temp/i$, $i\leftarrow max(Temp)$
%\State goto step 3

   \EndIf%\EndIf
   \Until {{$A =M$}}
   \State finish
  \end{algorithmic}
\end{algorithm}
%\emph{3. If $A$ is an intent or a pseudo-intent, the lectically next
%intent/pseudo-intent is computed by checking all $i \in M/A$ in
%descending order, until $A<_{i} \mathcal{L}(A+i)$ holds, then $\mathcal{L}(A+i)$ is the next intent  or pseudo-intent.}

%\emph{4. If  $\mathcal{L}(A+i)= (\mathcal{L}(A+i))''$ holds, then  $\mathcal{L}(A+i)$ is a concept intent, otherwise it is a pseudo-intent and the implication  $\mathcal{L}(A+i)\rightarrow (\mathcal{L}(A+i))''$ is added to $\mathcal{L}$.}

%\emph{5. If $\mathcal{L}(A+i) =M$, finish. Else, set $A\leftarrow\mathcal{L}(A+i)$ and continue with Step 3.}

\subsubsection{Modified Next Closure Algorithm}

We can modify the \emph{Next Closure} algorithm to make it more efficient in our case for computing triadic implications. The idea is to skip the redundant computations when compute the intents and the conditional stem bases, as the intents and part of stem bases for conditional contexts can be easily computed via Lemma 4 and lemma 10. Given formal contexts $\mathbb{K}_{1}:=(G, M, I)$ and $\mathbb{K}_{2}:=(M, B, J)$ and their corresponding triadic context $\mathbb{K}:=(G, M, B, Y)$. We first find out all the intents and the stem base of $\mathbb{K}_{1}$. Apply Algorithm 1 and Algorithm 3 to each conditional context to construct its intents and part of its stem base. As we have already constructed the corresponding set of intents and part of stem bases, we can use the following modified Next Closure algorithm.

%The modified algorithm Next Closure to compute the stem base:

%\begin{algorithm}
%\caption{ Modified Next Closure}
%{The set of all intents is known.}

%{The set $\mathcal{L}$ of all implications is initialized to $\mathcal{L }= \emptyset$.}

%{The lectically first concept intent or pseudo-intent is $\emptyset$.}
%  \begin{algorithmic}[1]
%  \If{$A$ is an intent or a pseudo-intent}
%  \State check all $i \in M/A$ in descending order, until $A+i$ is an concept intent or until $A<_{i} \mathcal{L}(A+i)$ holds. If $A+i$ is an concept intent, $(A+i)$ is the next intent. If $A<_{i} \mathcal{L}(A+i)$ holds, $\mathcal{L}(A+i)$ is the next pseudo-intent and add the implication $\mathcal{L}(A+i)\rightarrow(\mathcal{L}(A+i))''$ to $\mathcal{L}$
%\EndIf

%\If{$\mathcal{L}(A+i) =M$}
%  \State finish
%\Else
%\State set $A\leftarrow\mathcal{L}(A+i)$ and continue with Step 1
%   \EndIf
%  \end{algorithmic}
%\end{algorithm}
\begin{algorithm}[!htb]
\caption{Modified Next Closure}

Input: $\mathbb{C}$, $INT$, $\mathcal{L}_{\rm{partial}}$.

Output: $\mathcal{L}$.
%{The set $INT$ of all intents is known.}

%{The set $\mathcal{L}$ of implications is initialized as empty.}%to $\mathcal{L }= \emptyset$.}

{$A$ is a subset of $M$.}
   \begin{algorithmic}[1]
  \State $A \leftarrow\emptyset$%, $\mathcal{L} \leftarrow\emptyset$
 % \If{$A$ is an intent or a pseudo-intent}
  \State $Temp\leftarrow M/A$, $ i\leftarrow max(Temp)$
  \Repeat
  \If{$A<_{i} \mathcal{L}(A+i)$}

  \State{$A\leftarrow \mathcal{L}(A+i)$}
  \If{$(A\rightarrow B)\in \mathcal{L}_{\rm{partial}}$}
  \State{$\mathcal{L}\leftarrow \mathcal{L}\cup (A\rightarrow B)$, $\mathcal{L}_{\rm{partial}}\leftarrow \mathcal{L}_{\rm{partial}}/ (A\rightarrow B)$}
  \Else\If{$A\notin INT$}

  \State{$\mathcal{L}\leftarrow \mathcal{L}\cup (A\rightarrow A'')$}
   \EndIf
   \EndIf

%\If{$\mathcal{L}(A+i) =M$}
  %\State finish
%\Else

%\State goto step 2
\State $Temp\leftarrow M/A$, $ i\leftarrow max(Temp)$
 %  \EndIf
   \Else
\State $Temp\leftarrow Temp/i$, $i\leftarrow max(Temp)$
%\State goto step 3
   \EndIf%\EndIf
   \Until {$A =M$}
   \State finish
  \end{algorithmic}
\end{algorithm}

%\emph{1. The set $\mathcal{L}$ of all implications is initialized to $\mathcal{L }= \emptyset$.}

%\emph{2. The lectically first concept intent or pseudo-intent is $\emptyset$.}

%\emph{3. If $A$ is an intent or a pseudo-intent, the lectically next intent/pseudo-intent is computed by checking all $i \in M/A$ in descending order, until $A+i$ is an concept intent, then $(A+i)$ is the next intent; or until $A<_{i} \mathcal{L}(A+i)$ holds, then $\mathcal{L}(A+i)$ is the next pseudo-intent and add the implication $\mathcal{L}(A+i)\rightarrow(\mathcal{L}(A+i))''$ to $\mathcal{L}$.}

%\emph{4. If $\mathcal{L}(A+i) =M$, finish. Else, set $A\leftarrow\mathcal{L}(A+i)$(or $A\leftarrow (A+i)$ when $(A+i)$ is a concept intent) and continue with Step 3.}

In step 6, we first check if there exists $(A\rightarrow B)\in \mathcal{L}_{\rm{partial}}$ for the current $A$ that we are dealing with. If so, $A\rightarrow B$ is added to $\mathcal{L}$ and it is removed from $\mathcal{L}_{\rm{partial}}$. In step 9, we only need to check if $A$ is an intent and save the time that used to compute $A''$ in the original algorithm. Moreover, in our case, the intents under different combinations of condition can be constructed recursively. Consider $C \subseteq B, c_{n}\in B/C, n \leq |B|$, the intents under conditions $C\cup c_{n}$ can be constructed from the ones under conditions $C$ via Algorithm 1.
% What's more, we don't need to look into the formal context of $\mathbb{K}_{1}$, this modified algorithm only require the implications of $\mathbb{K}_{1}$.
\subsubsection{Efficient Planning for Computing the Triadic Implications}

For a triadic context $\mathbb{K}:=(G, M, B, Y)$, there are $2^{|B|}$ combinations of conditions, which means we need to check $2^{|B|}$ conditional contexts to find out all the conditional stem bases. When the number of meta-attributes is very large, the number of conditional contexts will increase exponentially. However, many of the combinations share no common relations between objects and attributes, which means no implication exists under such combinations of conditions. The modified Next Closure algorithm needs to be further improved to get rid of the uninteresting combinations.

For a triadic context $\mathbb{K}:=\{G, M, B, Y\}$ defined in Definition 17, $C_{1}\subseteq B$ and $C_{2}\subseteq B$, if $C_{2}$ has the same conditional context with $C_{1}$\footnote{The chance is very high since, in our case, the conditional context under conditions $C$ can be constructed from $\mathbb{K}_{1}$ by removing the attributes that don't have $C$ as their conditions} and we already know the concepts and implications under $C_{1}$, we should not spend time on computing the concepts and implications for the same context. As all the conditional contexts of $\mathbb{K}$ share the same set of objects, $C_{1}^J = C_{2}^J$ implies that $C_{1}$ and $C_{2}$ share the same conditional context. In another word, if $C_{1}$ and $C_{2}$ hold for the same attributes, their conditional contexts are the same. Considering the above mentioned situations, we introduce Algorithm 7.

%\begin{algorithm}[!htb]
%\caption{Non-redundant sets of Conditions}
%$C$ denotes the current conditions to check.% and it is initialized to $C:=\emptyset$.

%$List$ stores all the non-redundant sets of conditions ${C}_{1},{C}_{2},..., {C}_{n}$.

%$List'$ stores the sets $A_{1}:={C}_{1}^J,A_{2}:={C}_{2}^J,..., A_{n}:={C}_{n}^J$.

%$List$ and $List'$ are initialized as empty.

%$\# List$ stands for the quantity of the sets of conditions in $List$.
% \begin{algorithmic}[1]
% \State $C\leftarrow \emptyset$
%  \State $num\leftarrow 1$
%  \State $C_{num}\leftarrow C$, $A_{num}\leftarrow C^J$
%\Repeat
%\State $ i\leftarrow max(B/C)$
%\State $C\leftarrow C+i$
%\If {$C^J \nsubseteq List'$}
%\State $num++$
%\State $C_{num}\leftarrow C$, $A_{num}\leftarrow C^J$
%\EndIf
%\Until $C=B$
%\State $\# List\leftarrow num$
%  \end{algorithmic}
%\end{algorithm}
\begin{algorithm}[!htb]
\caption{Procedure for Computing the Conditional Stem Bases}

  Input: $\mathbb{K}_{1}$, $\mathbb{K}_{2}$ and $\mathbb{K}$.

  Output: $List$, $List'$ and $List_\mathcal{L}$.

  \begin{algorithmic}[1]
  \State $C\leftarrow \emptyset$

  \State Algorithm 5($\mathbb{K}_{1}$)
  \State $A_{0}\leftarrow M$, $\mathcal{L}_{0}\leftarrow\mathcal{L}_{\mathbb{K}_{1}}$, $INT\leftarrow INT_{\mathbb{K}_{1}}$
  %\If{$\mathcal{L}_{\mathbb{K}_\text{{C}}}=\emptyset$}
  %\State finish
%\EndIf
%\State apply Algorithm 4

\State $\# List\leftarrow 0$
\Repeat
\State $ i\leftarrow max(B/C)$

%\State{$C_{\text{previous}}\leftarrow C$,
\State $C\leftarrow C+i$

\State $j\leftarrow 0$
%\If {}
\State $Skip\leftarrow$ false
\Repeat

\If {$A_{j}={C}^{J}$}
\State $\mathcal{L}_{\mathbb{K}_C}\leftarrow \mathcal{L}_{j}$%, $INT_{\mathbb{K}_\text{{C}}}\leftarrow INT_{j}$
\State $Skip\leftarrow$ true
\State $j\leftarrow\# List$
\EndIf
\State $j++$

\Until $j>\# List$
\If {$Skip=$ false}
\State Algorithm 2($\mathbb{K}_{1}$, $INT$, $M_\text{remove}$)
\State Algorithm 4($\mathbb{K}_{1}$, $\mathcal{L}_{0}$, $M_\text{remove}$)
\State Algorithm 6($\mathbb{K}_{C}$, $INT_{\mathbb{K}_C}$, $\mathcal{L}_{\mathbb{K}_C\text{partial}}$)\EndIf
\State $C_{j}\leftarrow C$, $A_{j}\leftarrow C^J$, $\mathcal{L}_{j}\leftarrow\mathcal{L}_{\mathbb{K}_\text{{C}}}$%, $INT_{j}\leftarrow INT_{\mathbb{K}_\text{{C}}}$,
\State $\# List++$

 % \If {$\mathcal{L}_{\mathbb{K}_\text{{C}}}=\emptyset$}
 % \State $S\leftarrow\{ s $ $|$ $ s \in B/C $ $ and $ $ s > inf(C)\}$, $C\leftarrow C\cup S$

 % \EndIf
\Until $C=B$
\State finish
  \end{algorithmic}
\end{algorithm}

Algorithm 7 shows that how we can find out all the conditional stem bases for every subset of $B$ while ignoring all the `redundant' sets of conditions. In Algorithm 7, $List$ stores all the non-redundant sets of conditions ${C}_{1},{C}_{2},..., {C}_{n}$; $List'$ stores the sets $A_{1}={C}_{1}^J,A_{2}={C}_{2}^J,..., A_{n}={C}_{n}^J$ and $List_\mathcal{L}$ stores the stem bases $\mathcal{L}_{1}=\mathcal{L}_{\mathbb{K}_{C_{1}}},\mathcal{L}_{2}=\mathcal{L}_{\mathbb{K}_{C_{2}}},..., \mathcal{L}_{n}=\mathcal{L}_{\mathbb{K}_{C_{n}}}$. $\mathcal{L}_{\mathbb{K}_C}$ stands for the stem base of $\mathbb{K}_C$. All of the three are initialized as empty. The algorithm starts with $\mathbb{K}_{1}$ since the conditional contexts can be constructed from it. The algorithm applies Next Closure algorithm on $\mathbb{K}_{1}$. $M$, $\mathcal{L}_{\mathbb{K}_{C}}$ and $INT_{\mathbb{K}_{C}}$ are denoted to $A_{0}$, $INT$ and $\mathcal{L}_{0}$ respectively. Then the algorithm starts to check all the subsets $C$ of $B$ in lectic order. For each subset $C$, if it has the same conditional context with some conditional context that we have already checked, we should not compute the implications again. To be more precise, if $C^{J}=A_{j}$ for some $A_{j}\subseteq List'$, $\mathcal{L}_{j}$ is assigned to $\mathcal{L}_{\mathbb{K}_C}$ and $C$, $C^J$ and $\mathcal{L}_{\mathbb{K}_C}$ are added to $List$, $List'$ and $List_\mathcal{L}$ respectively. Otherwise, the algorithm uses Modified Next Closure algorithm to compute the triadic implications and $C$, $C^J$ and $\mathcal{L}_{\mathbb{K}_C}$ are added to $List$, $List'$ and $List_\mathcal{L}$ respectively.

\begin{algorithm}[!htb]
\caption{Procedure for Computing the Conditional Stem Bases}

  Input: $\mathbb{K}$.

  Output: $List$, $List'$ and $List_\mathcal{L}$.

  \begin{algorithmic}[1]
  \State $C\leftarrow \emptyset$, $j = 0$
 % \State Algorithm 4($\mathbb{K}_\text{{C}}$)
  %\State $C_{1}\leftarrow C$, $A_{1}\leftarrow C^J$, $\mathcal{L}_{1}\leftarrow\mathcal{L}_{\mathbb{K}_\text{{C}}}$, $INT\leftarrow INT_{\mathbb{K}_\text{{C}}}$

%  \If{$\mathcal{L}_{\mathbb{K}_\text{{C}}}=\emptyset$}
%  \State finish
%\EndIf
  %\State apply Next Closure on $\mathbb{K}_\text{{C}}$
  %\If{$\mathcal{L}_{\mathbb{K}_\text{{C}}}=\emptyset$}
 % \State finish
%\EndIf
%\State apply Algorithm 4
%\State $C_{1}\leftarrow C$, $A_{1}\leftarrow C^J$, $\mathcal{L}_{1}\leftarrow\mathcal{L}_{\mathbb{K}_\text{{C}}}$, $INT\leftarrow INT_{\mathbb{K}_\text{{C}}}$
%\State $\# List\leftarrow 0$

\Repeat

  \State $i \leftarrow max(B/C)$

%\State{$C_{\text{previous}}\leftarrow C$,
\State $C\leftarrow C+i$
%\State $j\leftarrow 1$
%\If {}
%\State $Skip\leftarrow$ false
%\Repeat
%\While{$j>\# List$}
%\If {$A_{j}={C}^{J}$}
%\State $\mathcal{L}_{\mathbb{K}_\text{{C}}}\leftarrow \mathcal{L}_{j}$, $INT_{\mathbb{K}_\text{{C}}}\leftarrow INT_{j}$
%\State $Skip\leftarrow$ true
%\State $j\leftarrow\# List$
%\EndIf
%\State $j++$

%\EndWhile%
%\Until $j>\# List$
%\If {$Skip=$ false}
%\State apply Lemma 4 on $\mathbb{K}_{{\text{C}_1}}$, $INT$ and $\mathbb{K}_\text{{C}}$
\State $j++$
\State Algorithm 5($\mathbb{K}_C$)
\State $C_{j}\leftarrow C$, $A_{j}\leftarrow C^J$, $\mathcal{L}_{j}\leftarrow\mathcal{L}_{\mathbb{K}_\text{{C}}}$

%\State $C_{j}\leftarrow C$, $A_{j}\leftarrow C^J$, $\mathcal{L}_{j}\leftarrow\mathcal{L}_{\mathbb{K}_\text{{C}}}$, $INT_{j}\leftarrow INT_{\mathbb{K}_\text{{C}}}$,
%\State $\# List++$

%\EndIf
 % \If {$INT_\text{{C}}=\emptyset$}
  %\State $S\leftarrow\{ s $ $|$ $ s \in B/C $ $ and $ $ s > inf(C)\}$, $C\leftarrow C\cup S$

 %\EndIf

\Until $C=B$
\State finish
  \end{algorithmic}
\end{algorithm}

 Since Algorithm 7 can not be applied to the triadic context other than the cases that are defined by Definition 17, we sketch another more generalized algorithm to compute the triadic implications for any triadic context. However, Algorithm 8 doesn't consider the case that different sets of conditions may share the same conditional context, since the chance is not that big for the triadic contexts other
than our case.%Apart from $List$, $List'$ and $List_\mathcal{L}$, Algorithm 5 additionally uses $List_{INT}$ to store the intents $INT_{1}=INT_{\mathbb{K}_{\text{C}_{1}}},INT_{2}=INT_{\mathbb{K}_{\text{C}_{2}}},..., INT_{n}=INT_{\mathbb{K}_{\text{C}_{n}}}$. In step 13, we use the original Next Closure algorithm since Lemma 4 can not be applied here. In step 16, we check if there is no intent rather than implication for the above mentioned reasons.

%\emph{1. Start with the case that the set of conditions is $\emptyset$. The conditional implications are implications under conditions $\emptyset$. If there is no such conditional implication, finish.}

%\emph{2. If the current combination of conditions is $B$, finish.}

%\emph{3. If $C$ is the current set of conditions, the lectically next combination of conditions is computed by checking all $i \in B/C$ in descending order, until $C<_{i} C+i$ holds, then $C+i$ is the next combination of conditions to check.}

%\emph{4. Now $C+i$ is the current combination of conditions. If the conditional context of $C+i$ is the same as the one for $C$, the implications that hold under $C$ also hold under $C+i$. Record the implications and go to step 2. Otherwise, compute all the implications under $C+i$ by using the Next Closure algorithm. If there is no intent, then regard the combination of conditions that we checked as $(C+i)\cup S$, S:=\{ s $|$ $s \in B/C $ $ and $ $ s > inf(C)\}$, rather than $C+i$. Then go to step 2.}

%Given formal contexts $\mathbb{K}_{1}:=(G, M, I)$ and $\mathbb{K}_{2}:=(M, B, J)$ and their corresponding triadic context $\mathbb{K}:=(G, M, B, Y)$:
%i=1=0 j=1
%1. Compute all the intents in $\mathbb{K}_{1}$.i=j save as intent0
%while i<max
%2. compute conditional implications under bi and intents from intent, i+1
\subsection{Conditional Attribute Implications in Triadic Formal Contexts}
Ganter and Obiedkov have discussed another type of implication for triadic contexts, which is called conditional implication.

\begin{definition}A conditional (attribute) implication $R \xrightarrow{C} S$ holds in a triadic context $\mathbb{K}$, iff for each condition $c\in C$ it holds that if an object $g\in G$ has all the attributes in $R$, it also has all the attributes in $S$.\end{definition}

$R \xrightarrow{C} S$ is read as ``$R$ implies $S$ under all conditions from $C$''.\footnote{In our case, may be read as ``$R$ implies $S$ under all meta-attributes from $C$''.} It is easy to see that $R \xrightarrow{C} S\Leftrightarrow (R \rightarrow S)_c$ for all $c\in C$ from definition 19 and definition 16.

\begin{lemma}The stem base with respect to conditional implications for $\mathbb{K}:=\{G, M, B, Y\}$ can be defined as the set of stem bases for every conditional context $\mathbb{K}_{c}$ where $c\in B$.
\end{lemma}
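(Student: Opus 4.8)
The plan is to mirror the proof of Lemma 7, reducing the claim about conditional implications to the already-established correspondence between triadic implications and ordinary implications in conditional contexts, specialized to singleton condition sets. First I would establish a bridging property: \emph{for $R, S \subseteq M$, the conditional implication $R \xrightarrow{C} S$ holds in $\mathbb{K}$ iff $R \rightarrow S$ holds in $\mathbb{K}_c$ for every $c \in C$.} This follows by combining the observation stated just before the lemma, that $R \xrightarrow{C} S \Leftrightarrow (R \rightarrow S)_c$ for all $c \in C$, with the key property proved inside Lemma 7, specialized to the singleton condition set $\{c\}$: namely $(R \rightarrow S)_{\{c\}}$ holds in $\mathbb{K}$ iff $R \rightarrow S$ holds in the conditional context $\mathbb{K}_{\{c\}} = \mathbb{K}_c$. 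Chaining these two equivalences over all $c \in C$ yields the bridging property.

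Next I would establish completeness. Let $\mathcal{L}_c$ denote the stem base of $\mathbb{K}_c$, and consider the tagged union $\bigcup_{c \in B} \mathcal{L}_c$, where each ordinary implication of $\mathbb{K}_c$ is recorded together with its condition $c$. Suppose $R \xrightarrow{C} S$ is any conditional implication holding in $\mathbb{K}$. By the bridging property, $R \rightarrow S$ holds in each $\mathbb{K}_c$ for $c \in C$; since $\mathcal{L}_c$ is complete for $\mathbb{K}_c$ by Theorem 1, $R \rightarrow S$ follows from $\mathcal{L}_c$ for every such $c$. Re-tagging these derivations by their respective conditions, $R \xrightarrow{C} S$ follows from $\{\mathcal{L}_c : c \in C\}$, which is contained in the tagged union. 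Hence the proposed set captures every valid conditional implication and is complete.

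Finally I would address non-redundancy using the same case split as in Lemma 7. If the single-condition conditional contexts $\mathbb{K}_c$ are pairwise distinct, then each $\mathcal{L}_c$ governs a context governed by no other, so no implication in one can be derived from the others and the union is non-redundant. If several conditions share the same conditional context, their stem bases carry the same underlying implications but with distinct condition tags; from the standpoint of conditional implications these are genuinely different statements, asserting behaviour under different conditions, so none follows from another and non-redundancy is preserved. As with Lemma 7, this does not violate Theorem 1.

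I expect the main obstacle to be making the inference relation ``follows from'' for conditional implications precise enough that the compound case $|C| > 1$ reduces cleanly to the conjunction of its single-condition instances. Once the bridging property is in hand this reduction is routine, so the technical heart of the argument is essentially the specialization of Lemma 7 to singleton conditions together with the decomposition of a conditional implication over the individual elements of $C$.
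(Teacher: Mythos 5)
Your proposal is correct and follows the same overall route as the paper: reduce conditional implications over a set $C$ to ordinary implications in the single-condition contexts $\mathbb{K}_c$, then invoke completeness and non-redundancy of each conditional stem base. The differences are in the bookkeeping. For completeness, the paper does not re-tag derivations as you do; instead it introduces an auxiliary formal context $\mathbb{C}\mathrm{imp}(\mathbb{K}):=(\mathrm{Imp}, B, I)$ whose objects are the implications of the single-condition contexts and whose attributes are the conditions, with incidence recording under which conditions an implication holds---this is the paper's device for making precise how compound conditional implications $R \xrightarrow{C} S$ with $|C|>1$ are recovered from the singleton data, which is exactly the obstacle you flagged at the end of your proposal. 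Your direct re-tagging argument is leaner, while the paper's construction has the side benefit of producing a concrete object from which all compound conditional implications can in principle be read off (even computed, via FCA on that auxiliary context). For non-redundancy, your treatment is actually more careful than the paper's: the paper's Lemma 11 proof disposes of it in one sentence (``each $K_{c_i}$ is uniquely defined and constructed''), silently passing over the possibility that two distinct conditions $c_1, c_2 \in B$ induce identical conditional contexts, whereas you import the case split from Lemma 7 and handle coinciding contexts via the distinctness of the condition tags. Both arguments inherit the same underlying imprecision about what ``follows from'' means for tagged implication sets, so neither is more rigorous at the foundational level, but your version closes a gap the paper leaves open.
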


\begin{proof}To prove that the set of stem bases for every conditional context is the stem base relevant to conditional implication, we need to prove that it is complete and non-redundant.

1. From the stem base we can infer all the conditional implications.(completeness)

As it is the set of stem bases  of conditional contexts $K_{c_i}$  for $1\leq i\leq |C|$, we can find all the implications for each $K_{c_i}$, which means that we can find out all the conditional implications under $c_i$. By the Definition 19, if an implication, for example, $R\rightarrow S$ holds for every condition $c_i, c_{i+1},..., c_j$ ,$1\leq i<j\leq |C|$, $R \xrightarrow{c_i, c_{i+1},..., c_j} S$ is also a conditional implication. Knowing all the conditional implications under each $c_i$, a good way to find out all the conditional implications of $\mathbb{K}$ is to define another formal context $\mathbb{C}\text{imp}(\mathbb{K}):=(\text{Imp}, B, I)$, where $\text{Imp}:=\{R\rightarrow S$ $|$ $R\rightarrow S \text{ is an implication of } K_{c_i} \text{ for } 1\leq i\leq |C|\}$, $I:=\{(R\rightarrow S) \times C_i $ $|$ $R\xrightarrow{C_{i}} S$ is a conditional implication of $\mathbb{K}\}$. We can see that for every possible conditional implication $R\xrightarrow{C_{i}} S$ of $\mathbb{K}$, $(R\rightarrow S) \times C_i \subseteq I$. Therefore, this stem base relevant to conditional implication is complete.

2. It is non-redundant.

As it is the set of stem bases, each stem base is non-redundant and each $K_{c_i}$ is uniquely defined and constructed, the set of stem bases is non-redundant as well.
\qed \end{proof}

Benefit from Lemma 11, we can compute the stem base with respect to conditional implications for $\mathbb{K}:=\{G,M,B,Y\}$ by computing the stem base for every conditional context $\mathbb{K}_{c}$ where $c\subseteq B$. The procedure is similar to the way we compute triadic implications. However, what we need to concern now are every $c\in B$ rather than every $C\subseteq B$, which means that we don't even need the lectic order to traverse the subsets.
\begin{algorithm}[!htb]
\caption{Efficient Planning for Computing the Conditional Stem Bases}

  Input: $\mathbb{K}_{1}$, $\mathbb{K}_{2}$ and $\mathbb{K}$.

  Output: $List$, $List'$ and $List_\mathcal{L}$.

  \begin{algorithmic}[1]
  \State $c\leftarrow \emptyset$, $B_\text{left}\leftarrow B$

  \State Algorithm 5($\mathbb{K}_{1}$)
  \State $A_{0}\leftarrow M$, $\mathcal{L}_{0}\leftarrow\mathcal{L}_{\mathbb{K}_{1}}$, $INT\leftarrow INT_{\mathbb{K}_{1}}$

\State $\# List\leftarrow 0$
\Repeat

\State $ c\leftarrow max(B_\text{left})$

\State $j\leftarrow 0$
%\If {}
\State $Skip\leftarrow$ false
\Repeat

\If {$A_{j}={c}^{J}$}
\State $\mathcal{L}_{\mathbb{K}_{c}}\leftarrow \mathcal{L}_{j}$%, $INT_{\mathbb{K}_\text{{C}}}\leftarrow INT_{j}$
\State $Skip\leftarrow$ true
\State $j\leftarrow\# List$
\EndIf
\State $j++$

\Until $j>\# List$
\If {$Skip=$ false}
\State Algorithm 2($\mathbb{K}_{1}$, $INT$, $M_\text{remove}$)
\State Algorithm 4($\mathbb{K}_{1}$, $\mathcal{L}_{0}$, $M_\text{remove}$)
\State Algorithm 6($\mathbb{K}_{c}$, $INT_{\mathbb{K}_{c}}$, $\mathcal{L}_{\mathbb{K}_{c}\text{partial}}$)\EndIf
\State $c_{j}\leftarrow c$, $A_{j}\leftarrow c^J$, $\mathcal{L}_{j}\leftarrow\mathcal{L}_{\mathbb{K}_{c}}$%, $INT_{j}\leftarrow INT_{\mathbb{K}_\text{{C}}}$,
\State $\# List++$

 \State $ B_\text{left}\leftarrow B_\text{left}/c$
\Until $B_\text{left}=\emptyset$
\State finish
  \end{algorithmic}
\end{algorithm}
Algorithm 9 shows that how we can find out all the conditional stem bases for every $b\in B$ while ignoring all the `redundant' sets of conditions. In Algorithm 9, $List$ stores every condition ${c}_{1},{c}_{2},..., {c}_{n}$ of $B$; $List'$ stores the sets $A_{1}={c}_{1}^J,A_{2}={c}_{2}^J,..., A_{n}={c}_{n}^J$ and $List_\mathcal{L}$ stores the stem bases $\mathcal{L}_{1}=\mathcal{L}_{\mathbb{K}_{c_{1}}},\mathcal{L}_{2}=\mathcal{L}_{\mathbb{K}_{c_{2}}},..., \mathcal{L}_{n}=\mathcal{L}_{\mathbb{K}_{c_{n}}}$. $\mathcal{L}_{\mathbb{K}_{c}}$ stands for the stem base of $\mathbb{K}_c$. All of the three are initialized as empty. The algorithm is very similar to Algorithm 7. Instead of checking all the subset of $B$ in lectic order, Algorithm 9 checks every condition in descending order. $B_\text{left}$ stores the conditions that are not checked yet.

\begin{algorithm}[!htb]
\caption{Procedure for Computing the Conditional Stem Bases}

  Input: $\mathbb{K}$.

  Output: $List$, $List'$ and $List_\mathcal{L}$.

  \begin{algorithmic}[1]
  \State $c\leftarrow \emptyset$, $j = 0$, $B_\text{left}\leftarrow B$
 % \State Algorithm 4($\mathbb{K}_\text{{C}}$)
  %\State $C_{1}\leftarrow C$, $A_{1}\leftarrow C^J$, $\mathcal{L}_{1}\leftarrow\mathcal{L}_{\mathbb{K}_\text{{C}}}$, $INT\leftarrow INT_{\mathbb{K}_\text{{C}}}$

%  \If{$\mathcal{L}_{\mathbb{K}_\text{{C}}}=\emptyset$}
%  \State finish
%\EndIf

\Repeat

  \State $ c\leftarrow max(B_\text{left})$

%\State $C\leftarrow C+i$

\State $j++$
\State Algorithm 5($\mathbb{K}_c$)
\State $C_{j}\leftarrow C$, $A_{j}\leftarrow C^J$, $\mathcal{L}_{j}\leftarrow\mathcal{L}_{\mathbb{K}_c}$

\State $ B_\text{left}\leftarrow B_\text{left}/c$
\Until $B_\text{left}=\emptyset$
\State finish
  \end{algorithmic}
\end{algorithm}

 Since Lemma 11 can not be applied to the triadic context other than the cases that are defined by Definition 17, we sketch another more generalized algorithm to compute the conditional implications for any triadic context. We simply check every $b\in B$ in descending order and compute the relevant conditional stem base by using the Next Closure algorithm.
\section{Conclusion and Future Tasks}

The ternary relationship among {objects, attributes} and {meta-attributes} can be presented in a triadic context and visualized in a triadic diagram. By Lemma 5 and Lemma 6, the (diadic) formal concepts are reserved and can be retrieved in the triadic diagram. The conditional stem bases can be computed by using the modified {Next Closure} algorithm in our case. Moreover, we present the method to ignore the uninteresting conditional contexts while computing the stem base with respect to the triadic implications and conditional implications. On the other hand, Lemma 10 is not sufficient to compute the complete stem bases. Extending Lemma 10 to compute the complete stem base will be a future task to discuss.

%We have already proved that, for a formal context $\mathbb{C}_{*}$ constructed by removing all the occurrences of an attribute $m$ in a formal context $\mathbb{C}$, $INT_{1}$(the intent set of $\mathbb{C}_{*}$) can be constructed from $INT$(the intent set of $\mathbb{C}$) by removing all the occurrences of $m$ in each element of $INT$. Intuitively, may $\mathcal{L}_{1}$(the stem base of $\mathbb{C}_{1}$) be constructed from $\mathcal{L}$(the stem base of $\mathbb{C}$) by removing all the occurrences of $m$ in each element of $INT$? If so, the modified Next Closure algorithm will be more useful. %further optimized.

\bibliographystyle{splncs}
\bibliography{meta1}

\begin{thebibliography}{1}

\bibitem{wille2009restructuring}
Wille, R.:
\newblock Restructuring lattice theory: An approach based on hierarchies of concepts.
\newblock In: Proceedings of the 7th International Conference on Formal Concept Analysis. ICFCA 2009, Berlin, Heidelberg, Springer-Verlag (2009)  314--339

\bibitem{lehmann1995triadic}
Lehmann, F., Wille, R.:
\newblock A triadic approach to formal concept analysis.
\newblock In: Proceedings of the Third International Conference on Conceptual Structures: Applications, Implementation and Theory, Berlin-Heidelberg-New York, Springer-Verlag (1995)  32--43

\bibitem{ganter2012book}
Ganter, B., Wille, R.:
\newblock Formal concept analysis: mathematical foundations.
\newblock Springer Science \& Business Media (1999)

\bibitem{ganter2002formal}
Ganter, B., Stumme, G.:
\newblock Formal concept analysis: Methods and applications in computer science.
\newblock (2002)

\bibitem{Biedermann:1997:TDR:645490.657192}
Biedermann, K.:
\newblock How triadic diagrams represent conceptual structures.
\newblock In: Proceedings of the Fifth International Conference on Conceptual Structures: Fulfilling Peirce's Dream. ICCS 1997, London, UK, Springer-Verlag (1997)  304--317

\bibitem{ganter2004implications}
Ganter, B., Obiedkov, S.:
\newblock Implications in triadic formal contexts.
\newblock In: Proceedings of the 12th International Conference on Conceptual Structures: Conceptual Structures at Work, ICCS 2004, Huntsville, AL, USA, July 19-23, Springer (2004)  186--195

\end{thebibliography}

\end{document}